\renewcommand{\footnoterule}{%
  \kern -3pt
  \hrule width \textwidth height 0.4pt
  \kern 2.6pt
}
\renewcommand*{\@seccntformat}[1]{\upshape \csname the#1\endcsname.\hspace{1ex}}
\renewcommand*{\section}{\@startsection{section}{1}{\z@}%
  {2.5ex \@plus 1ex \@minus 0.2ex}%
  {1.5ex \@plus 0.2ex}%
  {\normalfont\large\bfseries}}
\renewcommand*{\subsection}{\@startsection{subsection}{2}{\z@}%
  {2.5ex \@plus 1ex \@minus 0.2ex}%
  {1.5ex \@plus 0.2ex}%
  {\normalfont\normalsize\bfseries}}
\renewcommand*{\subsubsection}{\@startsection{subsubsection}{3}{\z@}%
  {2.5ex \@plus 1ex \@minus 0.2ex}%
  {-1.5ex \@plus -0.2ex}%
  {\normalfont\normalsize\bfseries}}
\renewcommand*{\paragraph}{\@startsection{paragraph}{4}{\z@}%
  {2.5ex \@plus 1ex \@minus 0.2ex}%
  {-1.5ex \@plus -0.2ex}%
  {\normalfont\normalsize\bfseries}}
\renewcommand*{\subparagraph}{\@startsection{subparagraph}{5}{\z@}%
  {2.5ex \@plus 1ex \@minus 0.2ex}%
  {-1.5ex \@plus -0.2ex}%
  {\normalfont\normalsize\slshape}}
\numberwithin{equation}{section}
\numberwithin{figure}{section}
\newcommand{\tel}{\ensuremath{\mathit{Tel}}}
\newcommand{\ibar}{\ensuremath{\bar{\imath}}}
\newcommand{\jbar}{\ensuremath{\bar{\jmath}}}
\newcommand{\signtwist}{^{(-1)}}
\newcommand{\emb}{\ensuremath{\hookrightarrow}}
\newcommand{\rmd}{\ensuremath{\mathrm{d}}}
\newcommand{\balltwo}{\ensuremath{\mathring{B}_2}}
\newcommand{\relC}{\ensuremath{\mathbf{C}}}
\newcommand{\relG}{\ensuremath{\mathbf{\Gamma}}}
\newcommand{\rels}{\ensuremath{\mathbf{s}}}
\newcommand{\reli}{\ensuremath{\mathbf{i}}}
\newcommand{\sfo}{\ensuremath{\mathsf{o}}}
\newcommand{\m}{\to}
\newcommand{\F}{\mathcal{F}}
\newcommand{\CC}{\mathfrak{C}}
\newcommand{\id}{\mathrm{id}}
\newcommand{\Emb}{\mathrm{Emb}}
\newcommand{\hocofib}{\mathit{hocofib}}
\newcommand{\hofib}{\mathit{hofib}}
\newcommand{\hocolim}{\mathit{hocolim}}
\newcommand{\colim}{\mathit{colim}}
\newcommand{\Aut}{\mathrm{Aut}}
\newcommand{\point}{\ensuremath{\text{\itshape pt}}}
\newcommand{\mbar}{\ensuremath{{\,\,\overline{\!\! M\!}\,}}}
\newcommand{\incl}[3][right]%
{%
\draw[<-,>=#1 hook] #2 to ($ #2!0.5!#3 $);
\draw[->] ($ #2!0.5!#3 $) to #3;%
}
\newcommand{\inclusion}[5][right]%
{%
\draw[<-,>=#1 hook] #4 to ($ #4!0.5!#5 $) node[#2,font=\small]{#3};
\draw[->] ($ #4!0.5!#5 $) to #5;%
}
\renewcommand{\leq}{\leqslant}
\renewcommand{\geq}{\geqslant}
\newcommand{\cB}{\mathcal{B}}
\newcommand{\cE}{\mathcal{E}}
\newcommand{\cF}{\mathcal{F}}
\newcommand{\cM}{\mathcal{M}}
\newcommand{\cN}{\mathcal{N}}
\newcommand{\cO}{\mathcal{O}}
\newcommand{\bN}{\mathbb{N}}
\newcommand{\bR}{\mathbb{R}}
\newcommand{\bZ}{\mathbb{Z}}
\newenvironment{itemize2}%
{\begin{itemize}

\setlength{\itemsep}{1pt}
\setlength{\parskip}{0pt}
\setlength{\parsep}{0pt}}%
{\end{itemize}}
\theoremstyle{plain}
\newtheorem{theorem}{Theorem}[section]
\newtheorem{lemma}[theorem]{Lemma}
\newtheorem{proposition}[theorem]{Proposition}
\newtheorem{corollary}[theorem]{Corollary}
\newtheorem{atheorem}{Theorem}
\theoremstyle{definition}
\newtheorem{definition}[theorem]{Definition}
\newtheorem{remark}[theorem]{Remark}
\begin{document}

\title{\Large\scshape Scanning for oriented configuration spaces}
\author{\small Jeremy Miller and Martin Palmer}
\date{\small\today}
\maketitle
{
\makeatletter
\renewcommand*{\BHFN@OldMakefntext}{}
\makeatother
\footnotetext{2010 \textit{Mathematics Subject Classification}:  55R80, 55R65, 57N65.}
\footnotetext{\textit{Key words and phrases}: Oriented configuration spaces, alternating groups, scanning, homology fibrations, local coefficients, group completion.}
\footnotetext{During part of the preparation of this article the second author was supported by Michael Weiss' Alexander von Humboldt Professorship grant.}
}

\begin{abstract}
\noindent In \cite{Palmer2013} the second author proved that the sequence of `oriented' configuration spaces on an open connected manifold exhibits homological stability as the number of particles goes to infinity. To complement that result we identify the corresponding limiting space, up to homology equivalence, as a certain explicit double cover of a section space. Along the way we also prove that the scanning map of \cite{Mc1} for unordered configuration spaces is acyclic in the limit.
\end{abstract}

\section{Introduction}

There are many interesting examples of families of spaces $\{Y_k\}$ whose homology groups $H_i(Y_k)$ are independent of $k$ for $k\gg i$. Examples include the classifying spaces of general linear groups \cite{Q,Charney1980}, mapping class groups \cite{Ha,Wahl2008}, automorphism groups of free groups \cite{Hatcher1995,HatcherVogtmann1998,HatcherWahl2010} and unordered configuration spaces of particles in an open connected manifold \cite{Mc1,Se}. In many of these cases (see for example \cite{Mc1,MW,Galatius2011}) one can find a computationally more tractable space $Z$ which is homology equivalent to the limiting space $\hocolim_k (Y_k)$. In \cite{Palmer2013} the second author proved homological stability for \emph{oriented configuration spaces} and the purpose of this paper is to describe the corresponding limiting space.

Oriented configuration spaces are natural generalizations of the classifying spaces of the alternating groups. One possible motivation for their study was given in \cite{GKY}, where it was shown that homological stability for oriented configuration spaces implies stability for the homotopy groups of spaces of positive and negative particles. We will also describe an application of these ideas to the study of the homology of the spaces appearing in the generalized Snaith splitting of \cite{B}.

\paragraph{Background.}
Before we state the results of this paper and of \cite{Palmer2013}, we first fix some notation and review some classical theorems regarding configuration spaces of unordered particles. Let $F_k(M)\coloneqq M^k \smallsetminus\Delta_f$ where $\Delta_f$ is the fat diagonal. Define $C_k(M)$ to be the quotient of $F_k(M)$ by the action of the symmetric group $\Sigma_k$, and $C^+_k(M)$ to be the quotient by the action of the alternating group $A_k$. We call these spaces respectively the configuration spaces of ordered, unordered and oriented collections of points in $M$.

Throughout, we require that the manifold $M$ be connected and of dimension at least $2$. We say that a manifold \emph{admits boundary} if it is the interior of a (not necessarily compact) manifold with (not necessarily compact) non-empty boundary. For such manifolds, Segal proved in \cite{Se} the following theorem.

\begin{theorem}[{\cite[Proposition A.1]{Se}}]\label{unorStab}
If $M$ is a manifold admitting boundary there is a map $t\colon C_k(M) \m C_{k+1}(M)$ which induces an isomorphism on homology for $* \leq k/2$.
\end{theorem}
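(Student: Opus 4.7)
The plan is to construct the stabilization map explicitly using a collar of the boundary, and then prove the homology isomorphism via a semi-simplicial resolution and a spectral-sequence comparison.

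Since $M$ admits a boundary, there is an open collar embedding $\phi\colon \bR^{n-1}\times(0,\infty)\hookrightarrow M$, where $n = \dim M$. Given $\xi \in C_k(M)$, I would define $t(\xi)$ by applying an ambient isotopy of $M$ (supported in the image of $\phi$) that pushes all particles of $\xi$ out of the subcollar $\phi(\bR^{n-1}\times(0,1])$, and then adjoining a new particle at $\phi(\mathbf{0},1/2)$. A routine verification shows that $t$ is well-defined up to homotopy, independent of the chosen collar.

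For the homology isomorphism, I would proceed by induction on $k$. Build a semi-simplicial space $X_\bullet$ augmented over $C_k(M)$, whose $p$-simplices parameterise a configuration in $C_{k-p-1}(M)$ together with $p+1$ additional ordered ``auxiliary'' particles lying in the collar. Each space of $p$-simplices then fibres over $C_{k-p-1}(M)$ with fibre the configuration space of $p+1$ ordered points in a Euclidean region. The key technical input is that the augmentation $\|X_\bullet\| \to C_k(M)$ is $\lfloor k/2 \rfloor$-connected, which one proves by a deformation-retraction argument exploiting that the collar is contractible and accommodates arbitrarily many particles. From this, one obtains a first-quadrant spectral sequence computing $H_*(C_k(M))$ in the range $* \leq k/2$ in terms of the $H_*$ of the spaces of $p$-simplices. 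Comparing with the analogous resolution for $C_{k+1}(M)$ via $t$, and inputting the inductive hypothesis for smaller values of $k$, one concludes that $t_\ast$ is an isomorphism in the claimed range.

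The main obstacle is establishing the connectivity bound for $\|X_\bullet\| \to C_k(M)$; this is a combinatorial-topological statement about complexes of configurations in a Euclidean region, in the spirit of the arc-complex connectivity arguments of Hatcher--Vogtmann, and is precisely what produces the stated stability range $* \leq k/2$. Once this bound is in hand, the spectral-sequence comparison and the inductive bookkeeping are routine.
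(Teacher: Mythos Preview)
The paper does not prove this theorem; it is quoted as background from Segal \cite[Proposition~A.1]{Se} and no argument is given beyond the construction of the stabilization map $t$ in \S\ref{ss-double-cover}. So there is nothing in the paper to compare your proposal against.

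That said, your sketch is a reasonable outline of the modern semi-simplicial approach to homological stability (in the style of Randal-Williams and Galatius--Randal-Williams), and is \emph{not} how Segal argues. Segal's original proof works by comparing $C_k(M)$ with a relative configuration space $C(M,M_0)$ for a suitable submanifold $M_0$, using a transfer-style argument to show that the natural map $C_k(M)\to C(M,M_0)$ is a homology isomorphism in a range; the stabilization map then factors through this relative space. Your resolution approach would also work, but be aware that the connectivity statement you flag as the ``main obstacle'' is genuinely the whole content: one typically proves it by identifying the relevant simplicial complex (an ``arc complex'' or ``injective words'' style object) and running a link-of-a-vertex induction, and getting the precise slope $k/2$ requires care. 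Your description of the $p$-simplices as ordered auxiliary particles in the collar, fibred over $C_{k-p-1}(M)$, is the right shape, though you should check that the face maps really assemble into a semi-simplicial structure compatible with the augmentation.
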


We call the map $t$ the ``stabilization map.'' Roughly, it involves moving all the particles away from the boundary and then adding a new particle near the boundary; see \S\ref{ss-concrete-models} for precise definitions. No such map exists for closed manifolds and in fact homological stability fails for closed manifolds in general.\footnote{For example, from the presentation of $\pi_1(C_k(S^2))$ in \cite{FV} we have $H_1(C_k(S^2);\bZ) \cong \bZ/(2k-2)$, which is not stable as $k\to\infty$. Homological stability does, however, hold rationally \cite{Ch,Randal-Williams2013} and for mod-$2$ coefficients \cite{MilgramLoeffler1988,BCT,Randal-Williams2013}. See also \cite{BenderskyMiller2014} and \cite{CanteroPalmer2014} for more stability results for the torsion in the homology of unordered configuration spaces on closed manifolds.}

Let $\pi\colon\dot TM\m M$ denote the fiberwise one-point compactification of the tangent bundle of $M$ and let $\Gamma(M)$ denote the space of compactly-supported sections of this bundle. There is a natural bijection $\pi_0(\Gamma(M)) \to \bZ$ given by the \emph{degree} of a compactly-supported section, whose definition we recall in \S\ref{sssDegree}. For $k\in\bZ$ we denote the path-component of $\Gamma(M)$ consisting of degree-$k$ sections by $\Gamma_k(M)$. In \cite{Mc1}, {McDuff} defined a scanning map $s\colon C_k(M) \m \Gamma_k(M)$ and proved the following two theorems.

\begin{theorem}[{\cite[Theorem 1.2]{Mc1}}]\label{scanlim}
If $M$ is a manifold admitting boundary the scanning maps $s\colon C_k(M) \m \Gamma_k(M)$ induce an isomorphism $H_*(C_\infty(M);\bZ) \to H_*(\Gamma_\infty(M);\bZ)$.
\end{theorem}

Here $C_\infty(M)$ denotes the homotopy colimit of the maps
\[
\cdots \to C_k(M) \to C_{k+1}(M) \to \cdots
\]
from Theorem \ref{unorStab}, and $\Gamma_\infty(M)$ denotes the homotopy colimit of analogous ``stabilization'' maps for the path-components $\Gamma_k(M)$ of $\Gamma(M)$.

\begin{theorem}[{\cite[Theorem 1.1]{Mc1}}]\label{scanrange}
The scanning map $s\colon C_k(M) \m \Gamma_k(M)$ induces an isomorphism on homology in the same range \textup{(}$* \leq k/2$\textup{)} as the map $t\colon C_k(M \smallsetminus \point) \m C_{k+1}(M \smallsetminus \point)$.
\end{theorem}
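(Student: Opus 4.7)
The plan is to derive the range statement from Theorems \ref{unorStab} and \ref{scanlim}, first handling the case when $M$ admits a boundary and then reducing the general case to that one.

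When $M$ admits a boundary, I would fix a collar $[0,1) \times \partial M \hookrightarrow M$ and use it to construct an auxiliary stabilization map $t'\colon \Gamma_k(M) \to \Gamma_{k+1}(M)$ on section spaces, obtained by concatenating a standard degree-$+1$ section piece inside the collar. By naturality of the scanning construction with respect to such collar-supported modifications, the square
\[
\begin{CD}
C_k(M) @>s>> \Gamma_k(M) \\
@VtVV @VVt'V \\
C_{k+1}(M) @>s>> \Gamma_{k+1}(M)
\end{CD}
\]
commutes up to homotopy. I would then combine three inputs: Theorem \ref{unorStab}, which says $t$ is a homology iso in $\ast \leq k/2$; Theorem \ref{scanlim}, which says scanning is a homology equivalence in the limit; and an auxiliary claim that $t'$ is a weak equivalence. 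A diagram chase then identifies $s$ on $H_\ast$ with a composition of a stability iso, the limiting scanning iso, and the inverse of a weak equivalence, yielding the conclusion in degrees $\ast \leq k/2$. The auxiliary claim on $t'$ holds because $\Gamma(M)$ carries a grouplike $E_1$-algebra structure via collar-concatenation, with $\pi_0 \Gamma(M) = \bZ$; hence all of its path components are weakly equivalent and $t'$, being translation by the degree-one generator, is a weak equivalence.

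For general (possibly closed) $M$, the plan is to reduce to the boundary case by removing a point. Fix $p \in M$, let $N := M \smallsetminus \{p\}$ (which admits a boundary), and let $D$ be a small open disk about $p$. Scanning is natural with respect to the open inclusions $N \hookrightarrow M$ and $D \hookrightarrow M$, and the restriction maps on section spaces fit into a genuine Mayer--Vietoris sequence. For configuration spaces one does not have a literal Mayer--Vietoris, so one must replace this with a McDuff--Segal-style homology fibration sequence. Since the boundary case already gives the result on $N$, and the classical computation $s\colon C_k(\bR^n) \to \Omega^n_k S^n$ gives it on $D$, a comparison argument between the configuration-side homology fibration and the section-side Mayer--Vietoris sequence then yields the result for $M$ in the same range.

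The main obstacle is the closed case: realising the configuration-side comparison as a homology fibration in the sense of McDuff--Segal, and checking that the range $\ast \leq k/2$ is preserved throughout the associated five-lemma / spectral sequence chase. This is precisely where the stability hypothesis for $C_k(M \smallsetminus \point)$ (as opposed to $C_k(M)$ itself) enters the theorem statement, since $M$ itself admits no stabilization map when it is closed.
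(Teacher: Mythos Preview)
The paper does not prove this theorem; it is quoted from McDuff's paper \cite{Mc1} and used as a black box elsewhere. So there is no ``paper's own proof'' to compare against directly.

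That said, your strategy closely parallels what the present paper does for the \emph{oriented} analogue. Your boundary case is exactly the argument of Corollary~\ref{rangeopen}: the stabilization map $T$ on section spaces is a homotopy equivalence (the paper constructs this explicitly in \S\ref{ss-double-cover} via \eqref{eStabMapForSections} and notes it has a homotopy inverse), scanning commutes with stabilization on the nose, and then Theorem~\ref{scanlim} together with Theorem~\ref{unorStab} gives the range by the evident ladder argument. This part of your proposal is correct and standard.

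For the closed case, your outline is on the right track but the mechanism you name is not quite the one that works. The argument in \S\ref{ss-closed} (written there for the oriented case, but McDuff's original for the unoriented case is the same idea) does not use a ``McDuff--Segal homology fibration'' on the configuration side. Instead one chooses a small open ball $B\subset M$, sets $N=M\smallsetminus B$, and observes that the map $\ec_k(M)\to M/N\cong S^d$ recording the unique point (if any) in $B$ gives a homeomorphism $\ec_k(M)/\ec_k(N)\cong \Sigma^d(\ec_{k-1}(N)_+)$. On the section side, $\Gamma_k(M)\to\Gamma(M,N)\simeq S^d$ is an honest fiber bundle with fiber $\Gamma_k(N)$, so the long exact sequence of the pair is the Wang sequence. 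Scanning is a map of pairs, and the five-lemma applied to the resulting ladder, together with the boundary result for $N$ and Theorem~\ref{unorStab} for the $(s\circ t)_*$ columns, yields the claim. The essential point you identified --- that the stability input is for $M\smallsetminus\point$ rather than $M$ --- is exactly right; it enters because the outer columns of the ladder live over $N$.
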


When the manifold $M$ admits boundary the spaces $\Gamma_k(M)$ are all homotopy equivalent (see \S\ref{sssStabScanMaps}), so this provides a limiting space $Z$ for the sequence $\{C_k(M)\}$. When $M$ is closed, we just have two sequences $\{C_k(M)\}$ and $\{\Gamma_k(M)\}$ which become better and better approximations of each other as $k\to\infty$.

\paragraph{Oriented configuration spaces.}

The analogue of Theorem \ref{unorStab} for oriented configuration spaces is the following:
\begin{theorem}[{\cite{Palmer2013}}]\label{altstab}
Let $M$ be a manifold admitting boundary. Then there is a map $t^+\colon C^+_k(M) \m C^+_{k+1}(M)$ which induces an isomorphism on homology for $* \leq (k-5)/3$ and a surjection for $* \leq (k-2)/3$.
\end{theorem}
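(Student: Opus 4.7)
The plan is to reduce stability for oriented configuration spaces to a twisted-coefficient statement for unordered configuration spaces, and then attack the twisted statement via a McDuff--Segal style resolution argument.

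First I would construct the stabilization map $t\colon C^+_k(M) \m C^+_{k+1}(M)$ by fixing a collar neighbourhood of the boundary, isotoping all existing particles out of this collar, and inserting a new particle inside the collar. This lifts the unordered stabilization map because the new particle can canonically be declared to be ``last'' in any ordering representing the orientation, so the orientation on the original configuration extends unambiguously. The double cover $C^+_k(M) \to C_k(M)$ corresponds to the sign homomorphism $\Sigma_k \to \bZ/2$, so the direct image of the constant sheaf splits and yields a natural decomposition
\[
H_*(C^+_k(M);\bZ) \;\cong\; H_*(C_k(M);\bZ) \,\oplus\, H_*(C_k(M);\bZ^{\mathrm{sgn}}),
\]
where $\bZ^{\mathrm{sgn}}$ is the local coefficient system associated to the sign representation. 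The stabilization $t$ respects this decomposition, and the untwisted summand already stabilizes in the range $* \leq k/2$ by Theorem \ref{unorStab}. The whole problem therefore reduces to proving stability for the twisted summand $H_*(C_k(M);\bZ^{\mathrm{sgn}})$.

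For the twisted summand I would use an augmented semi-simplicial set $X_\bullet \m C_k(M)$ whose $p$-simplices consist of a configuration in $M$ together with $p+1$ extra ordered auxiliary particles placed in a collar of the boundary, equivalently parametrized by a poset of disjoint embedded arcs from the boundary to points of the configuration. Showing that the geometric realization $\geomr{X_\bullet}$ is highly connected reduces to a standard arc-complex connectivity estimate available whenever $M$ admits a boundary. The resulting spectral sequence converges to $H_*(C_k(M);\bZ^{\mathrm{sgn}})$, with $E^1$ columns computed from the twisted homology of $C_{k-p-1}(M)$. The crucial feature is an alternation: swapping two of the ordered auxiliary labels flips the sign local system, so the coefficients on the $E^1$ columns alternate between $\bZ^{\mathrm{sgn}}$ and the untwisted $\bZ$. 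Inducting on the homological degree, one feeds in Theorem \ref{unorStab} on the untwisted columns and the inductive hypothesis on the twisted columns to propagate the desired isomorphism to level $k$.

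The main obstacle, and the source of the worse stability slope, is exactly this alternation. To conclude an isomorphism at level $k$ the induction must invoke the twisted stability statement two levels back rather than one, which forces the slope of the stability range to degrade from $1/2$ to $1/3$; this is what produces the ranges $(k-5)/3$ and $(k-2)/3$ in the statement, in contrast to the $k/2$ range in the unordered case. A secondary technical point is the verification that the auxiliary resolution $X_\bullet$ is highly connected, which is routine for manifolds admitting a boundary but requires tracking how arc complexes interact with the sign twist; once both ingredients are in place, the induction closes and yields the claimed isomorphism and surjection ranges.
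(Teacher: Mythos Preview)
This theorem is not proved in the present paper: it is quoted from \cite{Palmer2013} and used as an input. So there is no ``paper's own proof'' to compare against here; the relevant comparison is with the argument of \cite{Palmer2013}.

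Your sketch contains a genuine gap at the very first reduction step. You claim a decomposition
\[
H_*(C^+_k(M);\bZ) \;\cong\; H_*(C_k(M);\bZ) \,\oplus\, H_*(C_k(M);\bZ^{\mathrm{sgn}}),
\]
but this splitting of the pushforward only holds when $2$ is invertible in the coefficient ring. Over $\bZ$ the regular representation $\bZ[\bZ/2]$ does \emph{not} decompose as $\bZ\oplus\bZ^{\mathrm{sgn}}$; the trivial and sign sublattices together span an index-$2$ sublattice. What is true integrally is $H_*(C_k^+(M);\bZ)\cong H_*(C_k(M);\bZ[\bZ/2])$, and there are short exact sequences $0\to\bZ\to\bZ[\bZ/2]\to\bZ^{\mathrm{sgn}}\to 0$ and $0\to\bZ^{\mathrm{sgn}}\to\bZ[\bZ/2]\to\bZ\to 0$ of coefficient systems. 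So your reduction as written fails; you would need to replace the direct-sum argument by a long-exact-sequence argument (at a possible cost in the range), or else prove the result first for $\Q$ and all $\bZ/p$ and then reassemble, as is done in the proof of Corollary~\ref{rangeopentwist}.

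Beyond this, your outline diverges from what \cite{Palmer2013} actually does. That paper runs the Randal-Williams resolution/spectral-sequence machine \emph{directly} on the spaces $C_k^+(M)$, not on $C_k(M)$ with sign-twisted coefficients; the $1/3$ slope arises there for a different structural reason (the levelwise fibers of the resolution over $C_k^+(M)$ involve $C_{k-p-1}^+$ but the identifications needed to compare $d^1$ with stabilization maps cost an extra shift). Your alternation heuristic---that swapping auxiliary labels toggles between twisted and untwisted coefficients---is an interesting idea, but it is not how the cited proof proceeds, and you would need to make precise exactly which coefficient system lives on each column and why the differentials intertwine with the stabilization maps in the claimed way.
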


The goal of this paper is to provide analogues of Theorem \ref{scanlim} and Theorem \ref{scanrange} for oriented configuration spaces. For $k\geq 2$, the scanning map $s\colon C_k(M) \m \Gamma_k(M)$ induces an isomorphism on $H^1(-;\bZ/2)$, by Theorem \ref{scanrange} above and the universal coefficient theorem. Cohomology with mod-$2$ coefficients classifies double covers of a space, so this says that any double cover of $C_k(M)$ is the pullback of a (unique) double cover of $\Gamma_k(M)$. In particular $C_k^+(M)$ fits into a pullback square:
\begin{equation}\label{ePullback}
\centering
\begin{split}
\begin{tikzpicture}
[x=1.5mm,y=1.2mm]
\node (tl) at (0,10) {$C^+_k(M)$};
\node (tr) at (20,10) {$\Gamma^+_k(M)$};
\node (bl) at (0,0) {$C_k(M)$};
\node (br) at (20,0) {$\Gamma_k(M).$};
\node at (3,6.5) {$\lrcorner$};
\draw[->] (tl) to node[above,font=\small]{$s^+$} (tr);
\draw[->] (bl) to node[above,font=\small]{$s$} (br);
\draw[->] (tl) to (bl);
\draw[->] (tr) to (br);
\end{tikzpicture}
\end{split}
\end{equation}
There is an alternative, more geometric description of the associated double cover $\Gamma^+_k(M) \to \Gamma_k(M)$ which is described in \S\ref{sssDoubleCover}. Our analogue of Theorem \ref{scanrange} is:

\begin{atheorem}\label{orientrange}
The lifted scanning map $s^+\colon C^+_k(M) \m \Gamma^+_k(M)$ induces an isomorphism on homology in the range $*\leq (k-5)/3$ and a surjection for $*\leq (k-2)/3$.
\end{atheorem}

Again, when the manifold $M$ admits boundary the spaces $\Gamma_k^+(M)$ are all homotopy equivalent, so this provides a limiting space $Z$ for the sequence $\{C_n^+(M)\}$.

Note that unlike configuration spaces of unordered particles, or more generally configuration spaces with summable labels \cite{Sa}, oriented configuration spaces are not local: to determine a point in $C^+_k(M)$ one needs more than the information attached to each point in the configuration. Nevertheless, they still exhibit homological stability and we can still, via Theorem \ref{orientrange}, describe a limiting space.

To prove Theorem \ref{orientrange} we first prove the following strengthening of Theorem \ref{scanlim}:

\begin{atheorem}\label{acyclic}
If $M$ is a manifold admitting boundary, the scanning map in the limit $s\colon C_\infty(M)\to \Gamma_\infty(M)$ is acyclic.
\end{atheorem}

In \S\ref{ss-concrete-models} we define concrete models for configuration and section spaces so that the stabilization and scanning maps commute on the nose. The scanning maps therefore induce a well-defined map from the mapping telescope of the sequence of stabilization maps for configuration spaces to the mapping telescope of the sequence of stabilization maps for section spaces. This is the map $s$ referred to in the above theorem.

This theorem, combined with the stability result from \cite{Palmer2013}, directly implies Theorem \ref{orientrange} in the case of manifolds admitting boundary (see Corollary \ref{rangeopen}). In \S\ref{ss-closed} we show how to extend Theorem \ref{orientrange} to closed manifolds.

We emphasize that our method of deducing Theorem \ref{orientrange} from Theorem \ref{acyclic} uses homological stability for oriented configuration spaces with untwisted coefficients, so we cannot deduce that the lifted scanning map $s^+\colon C_k^+(M) \to \Gamma_k^+(M)$ is \emph{acyclic} in a range of degrees depending on $k$. We only know that it is a homology equivalence in a range, and acyclic in all homological degrees when $k\to\infty$. The question of acyclicity in a range before taking the limit is not pursued here.

To prove Theorem \ref{acyclic} we need the notion of an \emph{abelian homology fibration}, a twisted version of the homology fibration criterion of \cite{Mc1} and a corollary of a twisted version of the group-completion theorem. The full details of all this are worked out in \cite{MillerPalmer2014}, following the ideas of \cite{Mc1} and \cite{MSe} and using a result of \cite{Randal-Williams2013Groupcompletion}. The particular definitions and results which are needed for the present paper are recalled in \S\ref{s-prelim}.

\paragraph{Twisted and abelian homology equivalences.}

We now fix terminology for some different notions of homology equivalence. Let $f\colon X\to Y$ be a continuous map of spaces, and let $\cF$ denote a local coefficient system on $Y$ -- this can be thought of as a functor $\pi(Y)\to\mathsf{Ab}$ from the fundamental groupoid of $Y$ to the category of abelian groups or as a bundle of abelian groups over $Y$. It is called \emph{abelian} if, in the bundle viewpoint, the monodromy of any fiber around a commutator loop is trivial. In the functor viewpoint this says that for each $y\in Y$ the homomorphism $\pi_1(Y,y)=\pi(Y)(y,y)\to \Aut_{\mathsf{Ab}}(\cF(y))$ factors through an abelian group.

The map $f$ is called \emph{acyclic}, or a \emph{twisted homology equivalence}, if it induces isomorphisms $H_*(X;f^*\cF)\to H_*(Y;\cF)$ for all local coefficient systems $\cF$ on $Y$. It is called an \emph{abelian homology equivalence} if it induces isomorphisms for all abelian local coefficient systems on $Y$, and it is called a \emph{trivial homology equivalence}, or just a \emph{homology equivalence}, if it induces isomorphisms for the trivial coefficient system $\bZ$ (i.e.\ the trivial bundle $\bZ\times Y\to Y$). An alternative characterization (see \cite[Proposition 4.3]{Berrick1982}) of acyclicity of $f$ is that $\widetilde{H}_*(\hofib(f);\bZ)=0$ in all degrees (where $\bZ$ is a trivial coefficient system). From this it follows that the homotopy pullback of an acyclic map is acyclic. In particular in diagram \eqref{ePullback}, once $k\to\infty$, acyclicity of $s$ will imply acyclicity of $s^+$.

\paragraph{The sign representation.}

One can rephrase results about oriented configuration spaces in terms of homology of unordered configuration spaces with certain twisted coefficients. Let $\rho\colon \pi_1(C_k(M)) \m \bZ/2$ be the composition of the natural map $\pi_1(C_k(M)) \m \Sigma_k$ and the sign homomorphism $\Sigma_k \m \bZ/2$. For a ring $R$, the group-ring $R[\bZ/2]$ is given the structure of an $R[\pi_1(C_k(M))]$-module by the homomorphism $\rho$. By the definition of local homology, or a trivial application of the Serre spectral sequence to the fibration $S^0 \m C^+_k(M) \m C_k(M)$, we have an isomorphism:
\[
H_*(C^+_k(M);R) \;\cong\; H_*(C_k(M);R[\bZ/2]).
\]
When $2$ is invertible in $R$, the module $R[\bZ/2]$ decomposes as $R\oplus R\signtwist$, where $\pi_1(C_k(M))$ acts trivially on $R$, and on $R\signtwist$ it acts by $\rho$ and the action of $\bZ/2$ given by $r\mapsto -r$ (the ``sign representation''). Hence we have a decomposition:
\[
H_*(C^+_k(M);R) \;\cong\; H_*(C_k(M);R) \oplus H_*(C_k(M);R\signtwist).
\]

Theorem \ref{acyclic} allows one to study the homology of the spaces $C_k(M)$ with this twisted coefficient system. The groups $H_*(C_k(M);\bZ\signtwist)$ are interesting for the following reason. Let $M$ be a closed, almost parallelizable $d$-manifold. Then for any $m>0$ the space $\mathit{Map}_*(M,S^{d+m})$ of based maps splits stably (in the sense of stable homotopy theory) into summands which are Thom spaces of vector bundles over $C_k(M \smallsetminus \point)$ (Propositions 2 and 3 of \cite{B}; see also \cite{BCT}). The construction recovers the Snaith splitting \cite{Sn} when $M=S^d$. The Thom isomorphism theorem implies that the homologies of these Thom spaces are shifts of $H_*(C_k(M\smallsetminus\point);\bZ)$ or $H_*(C_k(M\smallsetminus\point);\bZ\signtwist)$ depending on whether or not the corresponding vector bundles are orientable. So to understand the homology of the spaces appearing in generalized Snaith splitting one needs to understand the homology of configuration spaces with sign-twisted coefficients.

\paragraph{Outline.}

We begin in \S\ref{s-prelim} by recalling two important tools which we will need: we recall from \cite{MillerPalmer2014} a twisted version of McDuff's homology fibration criterion (Proposition \ref{strongcrit}) and a corollary of a twisted version of the group-completion theorem (Proposition \ref{cGroupCompletion2}). In \S\ref{ss-concrete-models} we then carefully describe models for the stabilization and scanning maps which commute on the nose. In \S\ref{ss-r2crossn} we prove acyclicity of the scanning map in the limit (Theorem \ref{acyclic}) in the special case when the manifold $M$ is $\bR^m$. For this we need Proposition \ref{cGroupCompletion2} applied to a certain topological monoid built from configurations in $\bR^m$. In \S\ref{ss-open} this special case is used to extend Theorem \ref{acyclic} to any manifold $M$ admitting boundary; this step requires the use of the abelian homology fibration criterion (Proposition \ref{strongcrit}). From this we deduce that the lifted scanning map is a homology equivalence in a stable range (Theorem \ref{orientrange}) when the manifold $M$ admits boundary. Finally, in \S\ref{ss-closed} we show how to extend this latter result to closed manifolds and also discuss the scanning map on homology twisted by the sign representation.

\paragraph{Acknowledgments.}

We would like to thank Oscar Randal-Williams and Ulrike Tillmann for several enlightening discussions. Additionally, we thank the anonymous referee for many helpful suggestions and corrections, which have much improved this paper.

\section{An abelian homology fibration criterion and a corollary of the group-completion theorem } \label{s-prelim}

In this section we review some results from \cite{MSe}, \cite{Randal-Williams2013} and \cite{MillerPalmer2014}. We recall the definition of an abelian homology fibration, give a criterion for recognizing these and then state a corollary of the (twisted) group-completion theorem. In \cite{MillerPalmer2014} one works with a class of local coefficient systems $\CC$ which is closed under taking pullbacks. Here we specialize $\CC$ to the class of all abelian local coefficient systems, but the results in \cite{MillerPalmer2014} apply in the general situation.

Given a map $r\colon Y \to X$ and a point $z\in X$, we denote the homotopy fiber of $r$ over $z$ by $\hofib_z(r)$.

\begin{definition}\label{dAbelianHF}
A map $r\colon Y \m X$ is called an \emph{abelian homology fibration} if for all points $z \in X$ the natural inclusion $r^{-1}(z) \m  \hofib_z(r)$ is an abelian homology equivalence. That is, if $\F$ is an abelian local coefficient system on $\hofib_z(r)$ and $i\colon r^{-1}(z) \m \hofib_z(r)$ is the natural inclusion, then $i$ induces an isomorphism:
\[
i_*\colon H_*(r^{-1}(z);i^*\F) \longrightarrow H_*(\hofib_z(r);\F).
\]
\end{definition}

To state the recognition criterion for abelian homology fibrations we need the following:

\begin{definition}\label{dLocallyStalklike}
We say that a map $r\colon Y\to X$ is \emph{locally stalk-like} over $Z\subseteq X$ if there is a basis $\cB$ for the topology of $Z$ such that each $U\in \cB$ is contractible and contains a point $z_U$ such that the inclusion $r^{-1}(z_U) \hookrightarrow r^{-1}(U)$ is a weak equivalence. 
\end{definition}

Note that this is a much weaker condition than requiring this inclusion to be a weak equivalence for \emph{every} point in $U$.

The following is a version of McDuff's homology fibration criterion (Proposition 5.1 of \cite{Mc1}) which applies to the class of abelian local coefficient systems. It follows from Theorem 3.1 and Remark 3.2 of \cite{MillerPalmer2014} with $\CC$ taken to be the class of all abelian local coefficient systems.

\begin{proposition}\label{strongcrit}
Let $X$ be a topological space with closed filtration $\{X_n\}_{n\in\bN}$, meaning that the $X_n$ are closed subsets of $X$ satisfying $X_{n-1}\subseteq X_n$, $X=\bigcup_{n\in\bN} X_n$ and each compact subset of $X$ is contained in some $X_n$. Let $r\colon Y \to X$ be a map and assume that each $X_n$ and $r^{-1}(X_n)$ are Hausdorff. Then $r$ is an abelian homology fibration if the following three conditions are satisfied\textup{:}
\begin{itemize2}
\item[\textup{(i)}] The map $r$ is locally stalk-like over each $X_n$.
\item[\textup{(ii)}] The restriction of $r$ to each $X_n \smallsetminus X_{n-1}$ and to $X_0$ is an abelian homology fibration.
\item[\textup{(iii)}] For each $n$ there is $X_{n-1}\subseteq U_n\subseteq X_n$, with $U_n$ open in $X_n$, and homotopies $h_t \colon U_n \m U_n$ and $H_t \colon r^{-1}(U_n) \m r^{-1}(U_n)$ satisfying\textup{:}
  \begin{itemize2}
  \item[\textup{(a)}] $h_0=id$, $h_t(X_{n-1}) \subseteq X_{n-1}$, $h_1(U_n) \subseteq X_{n-1}$\textup{;}
  \item[\textup{(b)}] $H_0 = id$, $r \circ H_t = h_t \circ r$\textup{;}
  \item[\textup{(c)}] for all $x \in U_n$, $H_1\colon r^{-1}(x) \m r^{-1}(h_1(x))$ is an abelian homology equivalence.
  \end{itemize2}
\end{itemize2}
\end{proposition}

We now discuss an application of the group-completion theorem to topological monoids. Let $\cM$ be a topological monoid with $\pi_0(\cM)=\bN$. Denote its components by $\cM_k$, choose an element $m\in \cM_1$ and define $\cM_\infty$ to be the mapping telescope of the sequence of maps $\cM\to \cM\to \cM\to\cdots$, where each map is right-multiplication by $m$.

Remark 2 of \cite{MSe} states that there is a weak equivalence between $\cM_{\infty}^+$ and $\Omega B\cM$ when $\cM$ is homotopy-commutative. Here the notation $X^+$ denotes the Quillen plus construction with respect to the maximal perfect subgroup of $\pi_1(X)$,\footnote{In this case it follows from the fact that $\pi_1(\cM_{\infty}^+) \cong \pi_1(\Omega B\cM)$ is abelian that the maximal perfect subgroup of $\pi_1(\cM_\infty)$ is its commutator subgroup. This can also be proved directly; see Proposition 3.1 of \cite{Randal-Williams2013Groupcompletion}.} $\Omega$ is the based loop space functor and $B$ is the classifying space functor. However, some details are missing from \cite{MSe} to extract a complete proof of this fact. One needs to know first that the action of $\cM$ on $\cM_\infty$ induced by left-multiplication is by abelian homology equivalences when $\cM$ is homotopy-commutative. A detailed proof of this has been written in \cite{Randal-Williams2013Groupcompletion}. Second, one needs to know that McDuff and Segal's group-completion theorem (Proposition 2 of \cite{MSe}) is also true when ``homology equivalence'' and ``homology fibration'' are replaced by ``abelian homology equivalence'' and ``abelian homology fibration''. This twisted version of the group-completion theorem is proved as Theorem 4.1 of \cite{MillerPalmer2014}.\footnote{More generally, Theorem 4.1 of \cite{MillerPalmer2014} proves a version of the group completion theorem where ``homology equivalence'' and ``homology fibration'' are replaced by ``$\CC$-homology equivalence'' and ``$\CC$-homology fibration'' for any class $\CC$ of twisted coefficient systems which is closed under taking pullbacks. In contrast, the result of \cite{Randal-Williams2013Groupcompletion} that $\cM$ acts on $\cM_\infty$ by abelian homology equivalences is sharp: it is in general not true that $\cM$ acts on $\cM_\infty$ by twisted homology equivalences, i.e.\ acyclic maps.}

Putting this together, we indeed have a weak equivalence $\cM_{\infty}^+ \simeq \Omega B\cM$ (c.f.\ Corollary 1.2 of \cite{Randal-Williams2013Groupcompletion} and Corollary 4.2 of \cite{MillerPalmer2014}). Since loop spaces are simple we deduce:

\begin{proposition}\label{cGroupCompletion2}
For a homotopy-commutative monoid $\cM$ the space $\cM_{\infty}^+$ is simple.
\end{proposition}

See \cite{Randal-Williams2013Groupcompletion} and \S 4 of \cite{MillerPalmer2014} for a more detailed discussion of this result.


\section{Models for scanning and stabilization} \label{ss-concrete-models}

In this section we define concrete models for all our configuration and section spaces, and define the stabilization and scanning maps in these models so that they commute on the nose. We also give the more geometric description of the covering space $\Gamma^+(M) \to \Gamma(M)$ promised in the introduction.

Fix a connected, smooth, $m$-dimensional manifold $M$. As mentioned in the introduction, the unordered configuration space $C_k(M)$ of $k$ points in $M$ is defined to be the quotient of $M^k\smallsetminus \Delta_f$ by the action of the symmetric group $\Sigma_k$, where $\Delta_f$ is the fat diagonal $\{ (p_1,\dotsc,p_k) \mid p_i=p_j \text{ for some } i\neq j \}$. The space $\Gamma(M)$ is defined to be the space of compactly-supported sections of $\dot{T}M$, the fiberwise one-point compactification of the tangent bundle of $M$, given the subspace topology from the compact-open topology on $\mathrm{Map}(M,\dot{T}M)$. In \S\ref{sssDegree} we define the \emph{degree} of such a section; we denote by $\Gamma_k(M)$ the subspace (in fact, path-component) of $\Gamma(M)$ of sections of degree $k\in\bZ$.

We then define the models for the scanning and stabilization maps to be used in \S\S\ref{ss-r2crossn}--\ref{ss-closed}. For this some auxiliary data is needed. Given a Riemannian metric $g$ on $M$ and a smooth function $\rho\colon M\to (0,1)$ which is a strict lower bound for the injectivity radius of $(M,g)$ we define a scanning map
\begin{equation}\label{eScanning}
s = s(g,\rho) \colon C_k(M) \longrightarrow \Gamma_k(M).
\end{equation}

To define the stabilization maps we first define alternative models for the configuration and section spaces, depending on the following auxiliary data. First, we choose an embedding $e\colon M\emb (-1,1)^d\times \bR$ for some $d$ such that
\begin{equation}\label{eEmbeddingCondition}
e(M)\cap \pi_{d+1}^{-1}((-3,\infty)) = (-1,1)^{m-1}\times (-3,\infty),
\end{equation}
where $\pi_{d+1}$ is the projection onto the $(d+1)$st coordinate. Such an embedding exists if and only if $M$ is non-compact. This in particular chooses an ``end'' of $M$, namely a proper homotopy class of properly embedded rays in $M$. We will also refer to $(-1,1)^{m-1}\times (-3,\infty)$ or its preimage under $e$ as the \emph{chosen end of $M$}. Second, we choose a Riemannian metric $g$ and function $\rho$ as above such that the ball around $p$ in $M$ of radius $\rho(p)$ does not extend too far in the $(d+1)$st coordinate (see \S\ref{sssStabScanMaps} for the precise condition). The Riemannian metric is not required to be the one inherited from $(-1,1)^d \times \bR$ via the embedding $e$; the purpose of the embedding is simply to (a) choose an end of the manifold and (b) provide convenient coordinates for that end.

Given such data $(e,g,\rho)$ we define in \S\ref{sssConfigSectionSpaces} spaces $C_k(M,e)$ and $\Gamma_k(M,e)$ and construct homotopy equivalences $C_k(M) \simeq C_k(M,e)$ and $\Gamma_k(M) \simeq \Gamma_k(M,e)$. In \S\ref{sssStabScanMaps} we construct stabilization maps $t\colon C_k(M,e) \to C_{k+1}(M,e)$ and $T\colon \Gamma_k(M,e) \to \Gamma_{k+1}(M,e)$ and also a scanning map $s\colon C_k(M,e) \to \Gamma_k(M,e)$. The square
\begin{equation}\label{eScanStabSquare}
\centering
\begin{split}
\begin{tikzpicture}
[x=1mm,y=1mm]
\node (tl) at (0,15) {$C_k(M,e)$};
\node (tr) at (30,15) {$\Gamma_k(M,e)$};
\node (bl) at (0,0) {$C_{k+1}(M,e)$};
\node (br) at (30,0) {$\Gamma_{k+1}(M,e)$};
\draw[->] (tl) to node[above,font=\small]{$s$} (tr);
\draw[->] (bl) to node[below,font=\small]{$s$} (br);
\draw[->] (tl) to node[left,font=\small]{$t$} (bl);
\draw[->] (tr) to node[right,font=\small]{$T$} (br);
\end{tikzpicture}
\end{split}
\end{equation}
commutes on the nose (see \S\ref{sssCommutativity}) and the square
\begin{equation}\label{eTwoScanningMaps}
\centering
\begin{split}
\begin{tikzpicture}
[x=1mm,y=1mm]
\node (tl) at (0,15) {$C_k(M)$};
\node (tr) at (30,15) {$\Gamma_k(M)$};
\node (bl) at (0,0) {$C_k(M,e)$};
\node (br) at (30,0) {$\Gamma_k(M,e)$};
\draw[->] (tl) to node[above,font=\small]{$s$} (tr);
\draw[->] (bl) to node[below,font=\small]{$s$} (br);
\draw[->] (tl) to node[left,font=\small]{$\simeq$} (bl);
\draw[->] (tr) to node[right,font=\small]{$\simeq$} (br);
\end{tikzpicture}
\end{split}
\end{equation}
(where the top map is \eqref{eScanning}) commutes up to a homotopy described in \S\ref{sssStabScanMaps}. In \S\ref{sssOriented} we explain the modifications needed to define the stabilization map for oriented configuration spaces.

In \S\ref{sssDoubleCover} we geometrically construct a homomorphism $\pi_1(\Gamma_k(M,e)) \to \bZ/2$. This determines double covers (up to isomorphism) of all the spaces in the square \eqref{eTwoScanningMaps} above, and the induced double cover of $C_k(M)$ is the oriented configuration space. Hence this definition of $\Gamma_k^+(M)$ agrees with the introduction, where it was characterized as the unique double cover which pulls back along the scanning map to the oriented configuration space.

\begin{remark}
In \S\S\ref{ss-r2crossn} and \ref{ss-open} we use the models $C_k(M,e)$, $\Gamma_k(M,e)$ etc., whereas in \S\ref{ss-closed} we use $C_k(M)$, $\Gamma_k(M)$ and the model \eqref{eScanning} of the scanning map. If we choose a particular double cover $\Gamma_k^+(M,e)$ in the isomorphism class that we have defined and pull this back in the square \eqref{eTwoScanningMaps} to get particular double covers of the other spaces, then we get two double covers of $C_k(M)$ and the homotopy filling \eqref{eTwoScanningMaps} determines an isomorphism between them. Temporarily denoting these two double covers by $C_k^{\top}(M)$ and $C_k^{\perp}(M)$, the homotopy in \eqref{eTwoScanningMaps} also determines a homotopy filling the square:
\begin{equation}\label{eTwoScanningMapsLifted}
\centering
\begin{split}
\begin{tikzpicture}
[x=1mm,y=1mm]
\node (tl1) at (0,15) {$C_k^{\top}(M)$};
\node (tl2) at (20,15) {$C_k^{\perp}(M)$};
\node (tr) at (50,15) {$\Gamma_k^+(M)$};
\node (bl) at (0,0) {$C_k^+(M,e)$};
\node (br) at (50,0) {$\Gamma_k^+(M,e).$};
\draw[->] (tl2) to node[above,font=\small]{$s^+$} (tr);
\draw[->] (bl) to node[below,font=\small]{$s^+$} (br);
\draw[->] (tl1) to node[left,font=\small]{$\simeq$} (bl);
\draw[->] (tr) to node[right,font=\small]{$\simeq$} (br);
\node at (10,15) {$\cong$};
\end{tikzpicture}
\end{split}
\end{equation}
Hence for the purposes of \S\ref{ss-closed} (showing that the lift $s^+$ of the scanning map to the double covers is an isomorphism on homology in a range) the two models for $s^+$ are equivalent.
\end{remark}

\subsection{Degree of a section.}\label{sssDegree}
We begin by defining the \emph{degree} of a compactly supported section of $\pi\colon \dot TM\to M$. Let $\cO_M$ denote the orientation bundle of $M$, in other words the bundle of abelian groups whose fiber over $m\in M$ is $H_d(M,M\smallsetminus\{m\};\bZ)$. Similarly, let $\cO_{\dot TM}$ denote the orientation bundle of the manifold $\dot TM$, i.e.\ the bundle whose fiber over $\widetilde{m}\in \dot TM$ is $H_{2d}(\dot TM,\dot TM\smallsetminus\{\widetilde{m}\} ; \bZ)$.

Let $\sigma$ be a section of $\pi\colon \dot TM\to M$. We then have two bundles over $M$, namely $\cO_M$ and $\sigma^* \cO_{\dot TM}$, which we claim are canonically isomorphic: Over $m\in M$ their fibers are $H_d(M,M\smallsetminus\{m\};\bZ)$ and $H_{2d}(\dot TM,\dot TM\smallsetminus\{\sigma(m)\} ; \bZ)$ respectively. These are of course abstractly isomorphic, but moreover excision and the suspension isomorphism induce canonical isomorphisms for each $m$ which assemble to form a bundle isomorphism $\cO_M \cong \sigma^* \cO_{\dot TM}$.

Now we have a class $u\in H_c^d(\dot TM;\cO_{\dot TM})$ which is the Poincar{\'e} dual to the zero section $M\subset \dot TM$. Pulling this back along the induced map $\sigma^*$ on compactly-supported cohomology gives a class $\sigma^* u\in H_c^d(M;\sigma^* \cO_{\dot TM}) \cong H_c^d(M;\cO_M)$. Applying the Poincar{\'e} duality isomorphism we therefore have a class in $H_0(M;\bZ)=\bZ$. The \emph{degree} of $\sigma$ is defined to be this integer. Given two sections $\sigma, \tau \in \Gamma(M)$ in the same path-component, the maps $\sigma^*$ and $\tau^*$ on compactly-supported cohomology are equal, so $\mathrm{deg}(\sigma)=\mathrm{deg}(\tau)$. So this defines a map $\pi_0(\Gamma(M)) \to \bZ$.

Denote the subspace of $\Gamma(M)$ of degree-$k$ sections by $\Gamma_k(M)$; by the previous paragraph this is a union of path-components. Fix an open ball $B\subseteq M$ and note that the space of sections of $\pi$ with compact support in $B$ is homotopy equivalent to $\Omega^d S^d$. So clearly each $\Gamma_k(M)$ is non-empty since one can construct sections of any degree supported in $B$. Moreover, one can see that each $\Gamma_k(M)$ is path-connected as follows. First, as pointed out in \cite[Remark 3.8.8]{LurieDerivedAlgebraicGeometry}, each compactly-supported section of $\pi$ is homotopic (through such sections) to one whose support is contained in $B$.\footnote{The argument is as follows: let $M_{d-1}\subseteq M$ be the $(d-1)$-skeleton of a triangulation of $M$ (which exists since $M$ is smooth). The bundle $\pi|_{M_{d-1}}$ has fibers whose connectivity is at least the dimension of its base, so by obstruction theory all sections of $\pi|_{M_{d-1}}$ are homotopic (through sections). Therefore any section of $\pi$ may be homotoped to be supported in $M\smallsetminus M_{d-1}$, which is a union of disjoint open balls. Since the section was compactly-supported to begin with, this new section will have support contained in a finite union of these open balls, and therefore contained in some (maybe larger) open ball in $M$. This can then be homotoped to be supported inside the given ball $B$.} Given two compactly-supported sections $\sigma$ and $\tau$ of degree $k$, one can homotope them to sections $\sigma^\prime$ and $\tau^\prime$ with compact support in $B$, which will again have degree $k$. The fact that the space $\Omega^d S^d$ has $\pi_0$ equal to $\bZ$ (identified by the degree) means that one can homotope $\sigma^\prime$ to $\tau^\prime$ through sections supported in $B$.

Note that one could also \emph{define} the degree of $\sigma\in\Gamma(M)$ using the fact mentioned in the previous paragraph: first homotope $\sigma$ to a section $\sigma^\prime$ whose support is contained in $B$, so we can think of $\sigma^\prime$ as an element of $\Omega^d S^d$, and then define $\deg(\sigma) \coloneqq \deg(\sigma^\prime)$. A priori this is not necessarily well-defined, but in fact it is since it agrees with the definition given above.

\subsection{Configuration and section spaces.}\label{sssConfigSectionSpaces}

We now define alternative models for configuration and section spaces, which will allow us to define stabilization and scanning maps which commute on the nose. Recall that we defined $C_k(M)$ as the quotient space $(M^k\smallsetminus \Delta_f)/\Sigma_k$, where $\Delta_f$ is the fat diagonal in $M^k$, and $\Gamma(M)$ as the space of compactly-supported sections of $\dot TM\to M$. We write $\Gamma_k(M)$ for the path-component of $\Gamma(M)$ consisting of sections of degree $k$ and $C(M)$ for the disjoint union of $C_k(M)$ over all integers $k\geq 0$.

Assume that we have an embedding $e\colon M\emb (-1,1)^d \times \bR$ satisfying \eqref{eEmbeddingCondition}. Write $\hat{M}=e(M)$ and for $t\in [-3,\infty)$ define $\hat{M}_t \coloneqq e(M)\cap \pi_{d+1}^{-1}((-\infty,t))$. Then we define:
\begin{align*}
C_k(M,e) &\coloneqq \{ (c,t)\in C_k(\hat{M})\times [0,\infty) \mid c\subseteq \hat{M}_t \} \\
\Gamma_k(M,e) &\coloneqq \{ (\sigma,t)\in \Gamma_k(\hat{M})\times [0,\infty) \mid \mathrm{supp}(\sigma)\subseteq \hat{M}_t \}
\end{align*}
which we give the subspace topology inherited from the product topology on $C_k(\hat{M})\times [0,\infty)$ and $\Gamma(\hat{M})\times [0,\infty)$ respectively.

To justify this definition we will construct homotopy equivalences
\[
\ibar_C : C_k(M) \longleftrightarrow C_k(M,e) : \jbar_C \qquad\text{and}\qquad \ibar_\Gamma : \Gamma_k(M) \longleftrightarrow \Gamma_k(M,e) : \jbar_\Gamma
\]
for any such $(M,e)$. First choose a continuous map $\phi\colon [0,\infty) \to \Emb((-3,\infty),(-3,\infty))$ such that $\phi_t((-3,\infty)) = (-3,t)$ and $\phi_t$ is the identity near $-3$. This induces a continuous map $\bar{\phi}\colon [0,\infty) \to \Emb(\hat{M},\hat{M})$ via
\[
\bar{\phi}_t(p) = \begin{cases}
p & p\in \hat{M} \smallsetminus U \\
(q,\phi_t(s)) & p=(q,s) \in U,
\end{cases}
\]
where $U = (-1,1)^{m-1}\times (-3,\infty)$. Also choose a trivialization $\psi\colon \dot{T}U \to U\times S^d$ over $U$. Then define $\ibar_C(c) = (\bar{\phi}_0 (e(c)),0)$ and $\jbar_C(c,t) = e^{-1}(\bar{\phi}_t^{-1}(c))$. The composition $\jbar_C \circ \ibar_C$ is the identity and a homotopy $\id \Rightarrow \ibar_C \circ \jbar_C$ is given by $(s,c,t)\mapsto (\bar{\phi}_{st}(\bar{\phi}_t^{-1}(c)),st)$. So $\ibar_C$ and $\jbar_C$ are homotopy inverse, as required. We may similarly define $\ibar_\Gamma(\sigma) = (\sigma^\prime,0)$ where
\[
\sigma^\prime = \begin{cases}
\sigma & \text{on } \hat{M} \smallsetminus U \\
\psi^{-1} \circ (\bar{\phi}_0 \times \mathrm{id}) \circ \psi \circ \sigma \circ \bar{\phi}_0^{-1} & \text{on } (-1,1)^{m-1} \times (-3,0) \\
\infty & \text{on } (-1,1)^{m-1} \times [0,\infty)
\end{cases}
\]
and
\[
\jbar_\Gamma(\sigma,t) = \begin{cases}
\sigma & \text{on } \hat{M} \smallsetminus U \\
\psi^{-1} \circ (\bar{\phi}_t^{-1} \times \mathrm{id}) \circ \psi \circ \sigma \circ \bar{\phi}_t & \text{on } U
\end{cases}
\]
where we are implicitly identifying $M$ with $\hat{M}=e(M)$. The composition $\jbar_\Gamma \circ \ibar_\Gamma$ is the identity and a homotopy $\id \Rightarrow \ibar_\Gamma \circ \jbar_\Gamma$ is given by
\[
(s,\sigma,t) \mapsto \begin{cases}
\sigma & \text{on } \hat{M} \smallsetminus U \\
\psi^{-1} \circ (\bar{\phi}_{st}\times \mathrm{id}) \circ (\bar{\phi}_t^{-1} \times \mathrm{id}) \circ \psi \circ \sigma \circ \bar{\phi}_t \circ \bar{\phi}_{st}^{-1} & \text{on } (-1,1)^{m-1} \times (-3,st) \\
\infty & \text{on } (-1,1)^{m-1} \times [st,\infty).
\end{cases}
\]
Hence $\ibar_\Gamma$ and $\jbar_\Gamma$ are also homotopy inverse, as required.

\subsection{Stabilization and scanning maps.}\label{sssStabScanMaps}

We now define the stabilization maps for configuration and section spaces, and two models for the scanning map, each depending on some auxiliary data. The stabilization map for configuration spaces depends on an embedding $e$ of $M$ into $(-1,1)^d \times \bR$ which is prescribed at one end (so in particular $M$ must be open). The first model for the scanning map depends on a Riemannian metric $g$ and a function $\rho\colon M\to (0,1)$, and is defined for both open and closed manifolds. Finally, the stabilization map for section spaces and the second model for the scanning map depend on choices of $e$, $g$ and $\rho$ (so $M$ must be open).

We first define the stabilization map for configuration spaces, assuming that we have an embedding $e\colon M\emb (-1,1)^d \times \bR$ satisfying \eqref{eEmbeddingCondition}. As before we write $\hat{M} = e(M)$.

\begin{definition}\label{dStabilizationMapConfig}
Define the stabilization map $t\colon C_k(M,e) \to C_{k+1}(M,e)$ by
\begin{equation}\label{eStabMapConcrete}
(c,t) \mapsto (c\cup\{(\mathbf{0},t+\tfrac12)\},t+1),
\end{equation}
where $\mathbf{0}$ denotes $(0,\dotsc,0)\in (-1,1)^d$. Using the identifications of \S\ref{sssConfigSectionSpaces} this also determines a ``stabilization map'' $\jbar_C \circ t\circ \ibar_C \colon C_k(M)\to C_{k+1}(M)$ for any smooth, connected, non-compact manifold $M$, depending on the choice of embedding $e$.
\end{definition}

We next define the scanning map for any Riemannian manifold with a choice of lower bound for its injectivity radius. Let $g$ be a Riemannian metric on $M$ and let $\rho\colon M\to (0,1)$ be a strict lower bound for its injectivity radius. Let $\rmd$ denote the metric (in the sense of a distance function) induced by $g$ and for $p\in M$ and $r>0$ define $B_r(p)$ to be $\{ q\in M \mid \rmd(p,q)<r \}$. If $r\in (0,\rho(p)]$ the exponential map for $M$ restricts to a homeomorphism from the open $r$-ball in $T_p M$ to $B_r(p)$. Composing the inverse of this with the dilation $v\mapsto (\frac{1}{r-\lvert v\rvert} - \frac{1}{r}).v$ defines a homeomorphism
\[
\cE_{p,r}\colon B_r(p) \to T_p M.
\]
We also define a function $\epsilon\colon C_k(M)\times M \to [0,1]$ by:
\[
\epsilon(c,p) \coloneqq \mathrm{sup}\{ r\in [0,1] \mid r\leq\rho(p) \text{ and } \lvert B_r(p)\cap c\rvert \leq 1 \}.
\]
Note that the subset $\{ r\in [0,1] \mid r\leq\rho(p) \text{ and } \lvert B_r(p)\cap c\rvert \leq 1 \}$ of $[0,1]$ is closed and hence compact, so $\epsilon$ is a continuous function and $\lvert B_{\epsilon(c,p)}(p)\cap c\rvert \leq 1$ for all $(c,p)$. Also note that $\epsilon(c,p)>0$ for all $(c,p)$.

\begin{definition}\label{dScanningMap1}
Define the scanning map $s\colon C_k(M) \to \Gamma_k(M)$ by $s(c)=\sigma$ where
\begin{equation}\label{eScanningDef}
\sigma(p) = \begin{cases}
\cE_{p,\epsilon(c,p)}(q) \in T_p M & \text{if } B_{\epsilon(c,p)}(p)\cap c = \{q\} \\
\infty & \text{if } B_{\epsilon(c,p)}(p)\cap c = \varnothing.
\end{cases}
\end{equation}
\end{definition}

We now define a second model for the scanning map, of the form $s\colon C_k(M,e) \to \Gamma_k(M,e)$, depending on some auxiliary data $(e,g,\rho)$. Choose an embedding $e\colon M\emb (-1,1)^{m-1}\times \bR$ satisfying \eqref{eEmbeddingCondition}, a Riemannian metric $g$ on $M$ and a smooth function $\rho\colon M\to (0,1)$ which is a strict lower bound for the injectivity radius of $(M,g)$. The extra property which we require this to satisfy is that the radius-$\rho$ balls in $M$ do not extend too far in the $(d+1)$st coordinate direction, which is formulated precisely as follows. As previously, we write $U = (-1,1)^{m-1}\times (-3,\infty)$ for the chosen end of the manifold $\hat{M}=e(M)$ and implicitly identify $M$ with $\hat{M}$. For points $p\in U$ we require that
\begin{equation}\label{eCondition1}
B_{\rho(p)}(p) \subseteq (-1,1)^d \times (x-\tfrac18,x+\tfrac18)
\end{equation}
where $x=\pi_{d+1}(p)$ and for points $p\in \hat{M}\smallsetminus U$ we require that
\begin{equation}\label{eCondition2}
B_{\rho(p)}(p) \subseteq (-1,1)^d \times (-\infty,-3+\tfrac18).
\end{equation}
Definition \ref{dScanningMap1} applied to $\hat{M}$ and the data $(g,\rho)$ (transferred to $\hat{M}$ via $e$) gives a scanning map which we denote by $\hat{s}\colon C_k(\hat{M}) \to \Gamma_k(\hat{M})$.

\begin{definition}\label{dScanningMap2}
Define the model $s\colon C_k(M,e) \to \Gamma_k(M,e)$ for the scanning map by
\[
(c,t) \mapsto (\hat{s}(c),t+\tfrac14).
\]
\end{definition}

To describe a homotopy filling the square \eqref{eTwoScanningMaps} we first define a ``shifting'' operation on compactly-supported sections of $\hat{M}$ which roughly conjugates a section by the self-embedding $\bar{\phi}_t\colon \hat{M}\emb \hat{M}$. For $t\in [0,\infty)$ and $\sigma\in \Gamma(\hat{M})$ define
\[
\Phi_t(\sigma) = \begin{cases}
\sigma & \text{on } \hat{M} \smallsetminus U \\
\psi^{-1} \circ (\bar{\phi}_t \times \mathrm{id}) \circ \psi \circ \sigma \circ \bar{\phi}_t^{-1} & \text{on } (-1,1)^{m-1} \times (-3,t) \\
\infty & \text{on } (-1,1)^{m-1} \times [t,\infty).
\end{cases}
\]
For example in \S\ref{sssConfigSectionSpaces} we have $\ibar_C(\sigma) = (\Phi_0(\sigma),0)$. Now, the two ways around the square \eqref{eTwoScanningMaps} take a configuration $c\in C_k(M)$ to
\[
(\hat{s}(\bar{\phi}_0(c)),\tfrac14) \qquad\text{and}\qquad (\Phi_0(\hat{s}(c)),0)
\]
respectively. To fill in a homotopy between these maps one can define it to take a configuration $c\in C_k(M)$ to
\[
(\Phi_{\mathrm{tanh}(t)}(\hat{s}(\bar{\phi}_{\mathrm{tanh}(1-t)}(c))),\tfrac{t}{4})
\]
for $t\in (0,1)$. More informally, the two routes around \eqref{eTwoScanningMaps} either scan the configuration and then compress the resulting section so that its support lies in $\hat{M}_0$, or conversely compress the configuration so that it lies in $\hat{M}_0$ and then scan it. The homotopy interpolates between these two by compressing the configuration into $\hat{M}_t$ for $t\in (0,\infty)$, scanning and then further compressing the resulting section into $\hat{M}_0$.

\begin{definition}\label{dStabilizationMapSection}
Suppose we have an embedding $e\colon M\emb (-1,1)^d\times \bR$ satisfying \eqref{eEmbeddingCondition}, as well as a Riemannian metric $g$ on $M$ and a strict lower bound $\rho\colon M\to (0,1)$ for its injectivity radius satisfying \eqref{eCondition1} and \eqref{eCondition2}. Define the stabilization map $T\colon \Gamma_k(M,e)\to \Gamma_{k+1}(M,e)$ by $T(\sigma,t)=(\sigma^\prime,t+1)$ where
\[
\sigma^\prime(p) = \begin{cases}
\sigma(p) & \text{if } p\in \hat{M}_t \\
\cE_{p,\rho(p)}(q) & \text{if } q\in B_{\rho(p)}(p) \\
\infty & \text{otherwise},
\end{cases}
\]
where $q = (\mathbf{0},t+\frac14)$. Using the identifications of \S\ref{sssConfigSectionSpaces} this also determines a ``stabilization map'' $\jbar_\Gamma \circ T\circ \ibar_\Gamma \colon \Gamma_k(M) \to \Gamma_{k+1}(M)$ for any smooth, connected, non-compact manifold $M$, depending on the choice of embedding $e$.
\end{definition}

\paragraph{Reverse stabilization.}
In the definition above, the restriction of $\sigma^\prime$ to the strip
\[
\hat{M}_{t+1} \smallsetminus \hat{M}_t = (-1,1)^{m-1}\times [t,t+1)
\]
has degree $1$. We could insert a reflection into the definition of $\sigma^\prime$ so that it instead has degree $-1$, and thereby define a ``reverse stabilization map'' $U\colon \Gamma_k(M,e) \to \Gamma_{k-1}(M,e)$. It is not hard to show that $U$ is a homotopy inverse for $T$, so in particular the homotopy type of $\Gamma_k(M,e)$ is independent of $k$.

\subsection{Oriented configuration spaces.}\label{sssOriented}

Recall that the oriented configuration spaces are defined by $C_k^+(M) = (M^k \smallsetminus \Delta_f)/A_k$, and double-cover the unordered configuration spaces $C_k(M)$. The homotopy equivalence $C_k(M) \simeq C_k(M,e)$ constructed in \S\ref{sssConfigSectionSpaces} clearly lifts to a homotopy equivalence $C_k^+(M) \simeq C_k^+(M,e)$, where $C_k^+(M,e)$ is the double cover of $C_k(M,e)$ defined by
\[
C_k^+(M,e) \coloneqq \{ (c,t) \in C_k^+(\hat{M})\times [0,\infty) \mid c\subseteq \hat{M}_t \}.
\]
There are exactly two lifts of the stabilization map $t\colon C_k(M,e) \to C_{k+1}(M,e)$ to a map $C_k^+(M,e) \to C_{k+1}^+(M,e)$; they differ by the deck transformation which reverses the orientation of a configuration, in other words which permutes an ordered configuration representing an element of $C_{k+1}^+(M,e)$ by an odd permutation. We choose $t^+\colon C_k^+(M,e) \to C_{k+1}^+(M,e)$ to be the one defined by
\[
([p_1,\dotsc,p_k],t) \mapsto ([p_1,\dotsc,p_n,(\mathbf{0},t+\tfrac12)],t+1)
\]
where $[-]$ denotes a finite ordered set modulo even permutations.

\subsection{Scanning commutes with stabilization.}\label{sssCommutativity}

The stabilization and scanning maps were defined in \S\ref{sssStabScanMaps} in order to commute on the nose, as the following lemma proves.

\begin{lemma}
The following square commutes:
\begin{equation}\label{eStabScan}
\centering
\begin{split}
\begin{tikzpicture}
[x=1mm,y=1mm]
\node (tl) at (0,15) {$C_k(M,e)$};
\node (tr) at (30,15) {$C_{k+1}(M,e)$};
\node (bl) at (0,0) {$\Gamma_k(M,e)$};
\node (br) at (30,0) {$\Gamma_{k+1}(M,e)$};
\draw[->] (tl) to node[above,font=\small]{$t$} (tr);
\draw[->] (bl) to node[below,font=\small]{$T$} (br);
\draw[->] (tl) to node[left,font=\small]{$s$} (bl);
\draw[->] (tr) to node[right,font=\small]{$s$} (br);
\end{tikzpicture}
\end{split}
\end{equation}
\end{lemma}

An illustration of the two ways around this square is given in Figure \ref{fStabScan}.

\begin{proof}
Let $(c,t)\in C_k(M,e)$. Then
\begin{align*}
t(c,t) &= (c_1,t+1) & s(c_1,t+1) &= (\sigma_1,t+\tfrac54) \\
s(c,t) &= (\sigma_0,t+\tfrac14) & T(\sigma_0,t+\tfrac14) &= (\sigma_2,t+\tfrac54)
\end{align*}
where $c_1 = c\cup\{q\}$ for $q=(\mathbf{0},t+\frac12)$. We need to show that $\sigma_1=\sigma_2$.

First consider $p\in \hat{M}\smallsetminus \hat{M}_{t+\frac14}$. In this case we have $B_{\rho(p)}(p) \subseteq \hat{M}\smallsetminus \hat{M}_t$ by \eqref{eCondition1}, so $B_{\rho(p)}(p) \cap c_1$ can only be either $\{q\}$ or $\varnothing$. Hence $\epsilon(c_1,p)=\rho(p)$ and
\[
\sigma_1(p) = \hat{s}(c_1)(p) = \begin{cases}
\cE_{p,\rho(p)}(q) & \text{if } q\in B_{\rho(p)}(p) \\
\infty & \text{otherwise}.
\end{cases}
\]
This is also the value of $\sigma_2(p)$, by definition of $T$, so $\sigma_1$ and $\sigma_2$ agree on this subspace.

For $p\in \hat{M}_{t+\frac14}$ the value of $\sigma_2(p) = \sigma_0(p)$ is given by \eqref{eScanningDef} with $M\mapsto\hat{M}$, and $\sigma_1(p)$ is given by the same formula with $c$ replaced by $c_1$. To show that these agree it suffices to show that $\epsilon(c,p) = \epsilon(c_1,p)$ and $B_{\epsilon(c,p)}(p)\cap c = B_{\epsilon(c,p)}(p)\cap c_1$. Note that the value of $\epsilon(c,p)$ depends only on $p$ and the subconfiguration $B_{\rho(p)}(p)\cap c$ of $c$. The symmetric difference of $c$ and $c_1$ is $\{q\}$, but by \eqref{eCondition1} we have $q\notin B_{\rho(p)}(p)$. Hence we have
\[
B_{\rho(p)}(p)\cap c = B_{\rho(p)}(p)\cap c_1,
\]
so $\epsilon(c,p)=\epsilon(c_1,p)$. Since $\epsilon(c,p)\leq \rho(p)$ we also have $q\notin B_{\epsilon(c,p)}(p)$ and so
\[
B_{\epsilon(c,p)}(p)\cap c = B_{\epsilon(c,p)}(p)\cap c_1.\qedhere
\]
\end{proof}

\begin{figure}[ht]
\centering
\begin{tikzpicture}
[x=1mm,y=1mm,font=\footnotesize,scale=1.5]
\fill[black!20] (-10,0) rectangle (0,10);
\draw (-10,0)--(20,0);
\draw (-10,10)--(20,10);
\foreach \x in {0,10} \node at (\x,0) [fill,inner sep=1pt]{};
\node at (-10,-2) [anchor=east] {$t+\ldots$};
\node at (0,-2) {$0$};
\node at (10,-2) {$1$};
\draw (0,0)--(0,10);
\begin{scope}[xshift=45mm]
\fill[black!20] (-10,0) rectangle (0,10);
\draw (-10,0)--(20,0);
\draw (-10,10)--(20,10);
\foreach \x in {0,5,10} \node at (\x,0) [fill,inner sep=1pt]{};
\node at (0,-2) {$0$};
\node at (5,-2) {$\frac12$};
\node at (10,-2) {$1$};
\draw (10,0)--(10,10);
\node at (5,5) [fill,inner sep=1pt]{};
\end{scope}
\begin{scope}[yshift=-25mm]
\fill[black!20] (-10,0) rectangle (1.25,10);
\draw (-10,0)--(20,0);
\draw (-10,10)--(20,10);
\foreach \x in {0,1.25,2.5,10} \node at (\x,0) [fill,inner sep=1pt]{};
\node at (-10,-2) [anchor=east] {$t+\ldots$};
\node at (0,-2) {$0$};
\node at (1.25,-2) {$\frac18$};
\node at (2.5,-2) {$\frac14$};
\node at (10,-2) {$1$};
\draw (2.5,0)--(2.5,10);
\end{scope}
\begin{scope}[xshift=45mm,yshift=-25mm]
\fill[black!20] (-10,0) rectangle (1.25,10);
\fill[black!20] (5,5) circle (1.25);
\draw (-10,0)--(20,0);
\draw (-10,10)--(20,10);
\foreach \x in {0,1.25,5,10,12.5} \node at (\x,0) [fill,inner sep=1pt]{};
\node at (0,-2) {$0$};
\node at (1.25,-2) {$\frac18$};
\node at (5,-2) {$\frac12$};
\node at (10,-2) {$1$};
\node at (12.5,-2) {$\frac54$};
\draw (12.5,0)--(12.5,10);
\end{scope}
\node at (27.5,5) {$\rightsquigarrow$};
\node at (27.5,-20) {$\rightsquigarrow$};
\node at (5,-7.5) {\rotatebox{270}{$\rightsquigarrow$}};
\node at (50,-7.5) {\rotatebox{270}{$\rightsquigarrow$}};
\end{tikzpicture}
\caption{The effect of the maps of the square \eqref{eStabScan} on a point in $C_k(M,e)$. The pictures show the segment $(-1,1)^d\times (t-1,t+2)$ of $\hat{M}$. The configuration (respectively support of the section) is contained in the shaded region, except in the top-right where there is one extra configuration point as indicated.}\label{fStabScan}
\end{figure}
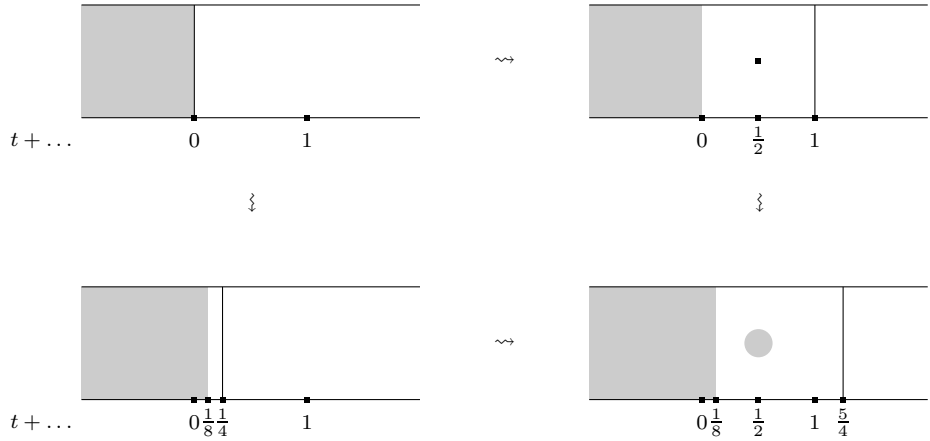

\subsection{A double cover of the section space.}\label{sssDoubleCover}

We now geometrically construct a homomorphism $\pi_1(\Gamma_k(M,e)) \to \bZ/2$. This defines double covers of all the spaces in the square \eqref{eTwoScanningMaps}, and it will be constructed so that the double cover of $C_k(M)$ that it determines is the oriented configuration space $C_k^+(M)$.

To do this we construct a map $e_\Gamma \colon \Gamma_k(M,e) \to \Gamma_k((-1,1)^d \times \bR)$ such that the square
\begin{equation}\label{eDoubleCoverSquare}
\centering
\begin{split}
\begin{tikzpicture}
[x=1mm,y=1mm]
\node (tl) at (0,15) {$C_k(M,e)$};
\node (tr) at (40,15) {$C_k((-1,1)^d \times \bR)$};
\node (bl) at (0,0) {$\Gamma_k(M,e)$};
\node (br) at (40,0) {$\Gamma_k((-1,1)^d \times \bR)$};
\draw[->] (tl) to node[above,font=\small]{$e_C$} (tr);
\draw[->] (bl) to node[below,font=\small]{$e_\Gamma$} (br);
\draw[->] (tl) to node[left,font=\small]{$s$} (bl);
\draw[->] (tr) to node[right,font=\small]{$s$} (br);
\end{tikzpicture}
\end{split}
\end{equation}
commutes up to homotopy, where the left-hand $s$ is as in Definition \ref{dScanningMap2}, the right-hand $s$ is as in Definition \ref{dScanningMap1} and $e_C(c,t)=c$. This will achieve the aim of this subsection by the following. The bottom right-hand space is homotopy equivalent to $\Omega_k^{d+1}S^{d+1}$ so since $d\geq 2$ it has fundamental group $\bZ/2$. The map on $\pi_1$ induced by $e_\Gamma$ is then the desired homomorphism $\pi_1(\Gamma_k(M,e)) \to \bZ/2$, defining a double cover $\Gamma_k^+(M,e)$ of $\Gamma_k(M,e)$. We want the pullback of this double cover along $s\circ \ibar_C\colon C_k(M) \to C_k(M,e) \to \Gamma_k(M,e)$ to be the oriented configuration space $C_k^+(M)$. Note that on fundamental groups the right-hand vertical map is the sign homomorphism $\Sigma_k \to \bZ/2$,\footnote{Assuming that $d\geq 2$ and $k\geq 2$. The former ensures that $\pi_1(C_k(\bR^{d+1})) \cong \Sigma_k$ and $\pi_1(\Omega_{\bullet}^{d+1} S^{d+1}) \cong \bZ/2$. The latter plus McDuff's theorem (Theorem \ref{scanrange}) implies that the map is surjective on $H_1$, and therefore also on $\pi_1$ since $\pi_1$ of the codomain is abelian. The only surjective map $\Sigma_k \to \bZ/2$ is the sign homomorphism.} so this is isomorphic to the pullback along $e_C \circ \ibar_C$ of the double cover of $C_k((-1,1)^d \times \bR)$ corresponding to the alternating group $A_k \leq \Sigma_k$. Hence it is indeed the oriented configuration space.

It thus remains to construct $e_\Gamma$ and see that \eqref{eDoubleCoverSquare} commutes up to homotopy. We will construct $e_\Gamma$ in detail but leave the homotopy-commutativity of the square as an exercise for the reader, since the proofs of our main results do not depend on the equivalence of the two definitions of $\Gamma_k^+(M)$.

\begin{definition}
For brevity write $E = (-1,1)^d \times \bR$, so we have an embedding $e\colon M\emb E$, and let $\pi \colon \nu \to M$ denote the normal bundle of this embedding. Choose a tubular neighborhood for $e$, in other words an embedding $\bar{e}\colon \nu \hookrightarrow E$ so that $\bar{e}\circ z = e$, where $z$ is the zero section of the normal bundle, also satisfying the following additional condition. The restrictions of tangent bundles $T\nu|_{z(M)}$ and $TE|_{e(M)}$ are both canonically isomorphic to $\nu \oplus TM$ and the bundle map $T\bar{e}\colon T\nu|_{z(M)} \to TE|_{e(M)}$ is of the form $\phi\oplus\mathrm{id}$ under these identifications for some bundle automorphism $\phi$ of $\nu$; we require that $\phi=\mathrm{id}$.

Let $(\sigma,t)\in \Gamma_k(M,e)$, so in particular $\sigma\in\Gamma(\hat{M})$. We then define
\[
e_\Gamma(\sigma,t) \coloneqq \sigma^\prime \in \Gamma(E)
\]
as follows. For $p\in E$ not in the tubular neighborhood $\bar{e}(\nu)$ let $\sigma^\prime(p)=\infty$. Now let $p=\bar{e}(x)$ for some $x\in \nu$. We want to define $\sigma^\prime(p)$ in $\dot T_p E$, which is identified via $\bar{e}$ with $\dot T_x \nu$. Note that the bundle $T\nu$ decomposes as $\pi^* \nu \oplus \pi^* TM$ so what we need is an element of the one-point compactification of $\nu_{\pi(x)} \oplus T_{\pi(x)}M$. But we have the element
\begin{equation}\label{ePartialCompactification}
(-x,\sigma(\pi(x))) \in \nu_{\pi(x)} \oplus \dot T_{\pi(x)} M
\end{equation}
which lives in a one-point partial compactification of $\nu_{\pi(x)} \oplus T_{\pi(x)}M$, and therefore also gives an element in its one-point compactification. This defines a section $\sigma^\prime \in \Gamma(E)$ and one can check that $\deg(\sigma^\prime) = \deg(\sigma)$ so in fact $\sigma^\prime \in \Gamma_k(E)$.
\end{definition}

This is the formal definition of $\sigma^\prime$. The idea is to extend the section $\sigma$ to the tubular neighborhood $\bar{e}(\nu)$ so that its value goes to infinity as you go to infinity in the fiber direction of the normal bundle $\nu$, so that we can then extend it by infinity outside the tubular neighborhood. To achieve this, we define $\sigma^\prime(x)$, for a point $x$ in the normal bundle over $\pi(x)\in M$, to be the value of $\sigma$ on $\pi(x)$ plus another term which goes to infinity as $x$ goes to infinity. Using the decomposition of the tangent bundle of the normal bundle explained above, we can take this extra term to be $-x$. See Figure \ref{fExtendingSection}.

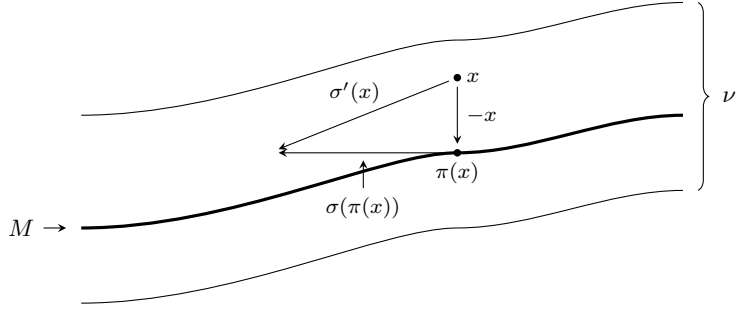
\begin{figure}[ht]
\centering
\begin{tikzpicture}
[x=1mm,y=1mm,font=\small,>=stealth]
\foreach \y in {15,-10}
  {
  \draw (0,\y) .. controls (10,\y) and (20,\y+5) .. (30,\y+5);
  \draw (-50,\y-10) .. controls (-30,\y-10) and (-10,\y) .. (0,\y);
  }
\draw[very thick] (0,0) .. controls (10,0) and (20,5) .. (30,5);
\draw[very thick] (-50,-10) .. controls (-30,-10) and (-10,0) .. (0,0);
\draw[decorate,decoration={brace,amplitude=4pt}] (32,20)--(32,-5);
\node at (34,7.5) [anchor=west] {$\nu$};
\node (x) at (-55,-10) [anchor=east] {$M$};
\draw[->] (x.east) to (-52,-10);
\node (a) at (0,10) [fill,circle,inner sep=1pt,outer sep=2pt] {};
\node (b) at (0,0) [fill,circle,inner sep=1pt,outer sep=2pt] {};
\node (c) at (-25,0) {};
\draw[->] (a) to (b);
\draw[->] (b) to (c);
\draw[->] (a) to (c);
\node at (0,0) [anchor=north,font=\footnotesize] {$\pi(x)$};
\node at (0,10) [anchor=west,font=\footnotesize] {$x$};
\node at (0,5) [anchor=west,font=\footnotesize] {$-x$};
\node (y) at (-12.5,-10) [anchor=south,font=\footnotesize] {$\sigma(\pi(x))$};
\draw[->] (y.north) to (-12.5,-1);
\node at (-9,8) [anchor=east,font=\footnotesize] {$\sigma^\prime(x)$};
\end{tikzpicture}
\caption{The definition of $\sigma^\prime$ on a point $x$ in the normal bundle $\nu$. This defines $\sigma^\prime$ on the tubular neighborhood $\bar{e}(\nu)$, and outside this tubular neighborhood $\sigma^\prime$ is defined to be $\infty$.}\label{fExtendingSection}
\end{figure}

\section{Euclidean spaces}\label{ss-r2crossn}

Let $(M,e)$ be as in \S\ref{ss-concrete-models}. The sequences of stabilization maps for configuration and section spaces for $k\geq 0$ determine mapping telescopes $\tel(C_k(M,e))$ and $\tel(\Gamma_k(M,e))$, and since the scanning maps commute with stabilization maps there is an induced map of mapping telescopes $s\colon \tel(C_k(M,e)) \to \tel(\Gamma_k(M,e))$. The precise statement of Theorem \ref{acyclic} is that this map is acyclic.

Write $C(M,e) = \bigsqcup_{k\in\bN}C_k(M,e)$ and $\Gamma(M,e) = \bigsqcup_{k\in\bZ}\Gamma_k(M,e)$. The stabilization maps $t$ and $T$ are endomorphisms of $C(M,e)$ and $\Gamma(M,e)$ respectively, and we can also form mapping telescopes $\tel(C(M,e))$ and $\tel(\Gamma(M,e))$ with respect to these endomorphisms. The scanning map $s\colon C(M,e) \to \Gamma(M,e)$ commutes with these endomorphisms and therefore induces a map of mapping telescopes $s_\tel \colon \tel(C(M,e)) \to \tel(\Gamma(M,e))$. There is a commutative square
\begin{equation}\label{eTelescopeSquare}
\centering
\begin{split}
\begin{tikzpicture}
[x=1mm,y=1mm]
\node (tl) at (0,15) {$\tel(C(M,e))$};
\node (tr) at (40,15) {$\bZ\times \tel(C_k(M,e))$};
\node (bl) at (0,0) {$\tel(\Gamma(M,e))$};
\node (br) at (40,0) {$\bZ\times \tel(\Gamma_k(M,e))$};
\incl{(tl)}{(tr)}
\draw[->] (bl) to (br);
\draw[->] (tl) to node[left,font=\small]{$s_\tel$} (bl);
\draw[->] (tr) to node[right,font=\small]{$1\times s$} (br);
\end{tikzpicture}
\end{split}
\end{equation}
in which the horizontal maps are homotopy equivalences, so it is equivalent to prove that the map $s_\tel \colon \tel(C(M,e)) \to \tel(\Gamma(M,e))$ is acyclic.

In this section we prove this in the special case $(M,e) = (E^m,\iota)$ for $d\geq m\geq 2$, where $E^m=(-1,1)^{m-1}\times\bR$ and $\iota$ is the inclusion of $E^m$ into $(-1,1)^d\times\bR$. The main technical input for this is the following.

\begin{lemma}\label{lSimple}
The spaces $\Gamma(E^m,\iota)$ and $\tel(C(E^m,\iota))^+$ are simple, where $(-)^+$ denotes Quillen plus-construction with respect to the maximal perfect subgroup.
\end{lemma}

Before proving this we show how it implies Theorem \ref{acyclic} in the special case.

\begin{proof}[Proof of Theorem \ref{acyclic} when $(M,e)=(E^m,\iota)$]
Recall from \S\ref{sssStabScanMaps} that the stabilization maps $T\colon \Gamma_k(M,e) \to \Gamma_{k+1}(M,e)$ are homotopy equivalences for any $(M,e)$, so we have a homotopy equivalence $\tel(\Gamma(M,e)) \simeq \Gamma(M,e)$. Also, by McDuff's theorem \cite[Theorem 1.2]{Mc1} (Theorem \ref{scanlim} in the introduction) the map
\begin{equation}\label{eStabilizationTel}
s_\tel \colon \tel(C(M,e)) \longrightarrow \tel(\Gamma(M,e))
\end{equation}
is a homology equivalence (for untwisted integral coefficients) for any $(M,e)$.

Lemma \ref{lSimple} tells us that $\tel(\Gamma(E^m,\iota))$ and $\tel(C(E^m,\iota))^+$ are simple spaces, so applying Quillen's plus-construction to the map \eqref{eStabilizationTel} for $(M,e)=(E^m,\iota)$ results in a homology equivalence between simple spaces. This is a weak equivalence by the homology Whitehead theorem, so the original map \eqref{eStabilizationTel} is acyclic.
\end{proof}

\begin{proof}[Proof of Lemma \ref{lSimple}]
We will write $J=(-1,1)$. Recall that
\begin{align*}
C_k(E^m,\iota) &= \{ (c,t)\in C_k(J^{m-1}\times\bR)\times [0,\infty) \mid c\subseteq J^{m-1}\times (-\infty,t) \} \\
\Gamma_k(E^m,\iota) &= \{ (\sigma,t)\in \Gamma_k(J^{m-1}\times\bR)\times [0,\infty) \mid \mathrm{supp}(\sigma)\subseteq J^{m-1}\times (-\infty,t) \}
\end{align*}
We define subspaces
\begin{align*}
\cM_k &\coloneqq \{ (c,t)\in C_k(J^{m-1}\times\bR)\times [0,\infty) \mid c\subseteq J^{m-1}\times (0,t) \} \\
\cN_k &\coloneqq \{ (\sigma,t)\in \Gamma_k(J^{m-1}\times\bR)\times [0,\infty) \mid \mathrm{supp}(\sigma)\subseteq J^{m-1}\times (0,t) \}
\end{align*}
and let $\cM = \bigsqcup_{k\in\bN}\cM_k$ and $\cN = \bigsqcup_{k\in\bZ}\cN_k$.

For $t\in\bR$ define $\lambda_t$ to be the automorphism $(x,u)\mapsto (x,u+t)$ of $J^{m-1}\times\bR$. Also choose a trivialization of the bundle $\dot{T}(J^{m-1}\times\bR)$, i.e.\ a homeomorphism $\psi\colon \dot{T}(J^{m-1}\times\bR) \to J^{m-1}\times\bR\times S^m$ over $J^{m-1}\times\bR$. Then we can define monoid structures on $\cM$ and $\cN$ by
\begin{align*}
(c_1,t_1)\cdot (c_2,t_2) &= (c_1 \cup \lambda_{t_1}(c_2),t_1+t_2) \\
(\sigma_1,t_1)\cdot (\sigma_2,t_2) &= (\sigma_3,t_1+t_2)
\end{align*}
where $\sigma_3$ is defined by
\[
\sigma_3 = \begin{cases}
\sigma_1 & \text{on } J^{m-1}\times (0,t_1) \\
\psi^{-1}\circ (\lambda_{t_1}\times\mathrm{id}) \circ \psi \circ \sigma_2 \circ \lambda_{-t_1} & \text{on } J^{m-1}\times (t_1,t_1+t_2) \\
\infty & \text{otherwise}.
\end{cases}
\]
Note that $\pi_0(\cM) = \bN$ and $\pi_0(\cN) = \bZ$, so $\cN$ is grouplike.

The inclusions $\cM_k \emb C_k(E^m,\iota)$ and $\cN_k \emb \Gamma_k(E^m,\iota)$ are homotopy equivalences; for example one can define homotopy inverses as follows. Choose an order-preserving homeomorphism $\phi\colon (-1,0) \to (-\infty,0)$ and for $t\in (0,\infty)$ define a homeomorphism $\bar{\phi}_t\colon J^{m-1}\times (0,t) \to J^{m-1}\times (-\infty,t)$ by $\bar{\phi}_t(x,u) = (x,\phi(\frac{u}{t}-1)+t)$. We can then define homotopy inverses by $(c,t) \mapsto (\bar{\phi}_t^{-1}(c),t)$ and $(\sigma,t) \mapsto (\sigma^\prime,t)$, where
\[
\sigma^\prime = \begin{cases}
\psi^{-1}\circ (\bar{\phi}_t^{-1}\times\mathrm{id})\circ \psi \circ \sigma \circ \bar{\phi}_t & \text{on } J^{m-1}\times (0,t) \\
\infty & \text{otherwise}.
\end{cases}
\]

Hence we have a homotopy equivalence $\cN \simeq \Gamma(E^m,\iota)$. Since $\cN$ is a grouplike monoid, in particular a grouplike H-space, it is simple, and therefore so is $\Gamma(E^m,\iota)$.

The monoid $\cM$ is not grouplike, but it is homotopy-commutative since $m\geq 2$ (this can be shown analogously to the proof that higher homotopy groups are abelian). Let $m_1\in\cM_1$ be the element $(\{(\mathbf{0},\frac12)\},1)$. Note that the restriction of the stabilization map $t\colon C_k(E^m,\iota) \to C_{k+1}(E^m,\iota)$ to $\cM_k$ is precisely right-multiplication by $m_1$. Hence the square
\begin{center}
\begin{tikzpicture}
[x=1mm,y=1mm]
\node (tl) at (0,12) {$C(E^m,\iota)$};
\node (tr) at (30,12) {$C(E^m,\iota)$};
\node (bl) at (0,0) {$\cM$};
\node (br) at (30,0) {$\cM$};
\incl{(bl)}{(tl)}
\incl{(br)}{(tr)}
\draw[->] (tl) to node[above,font=\small]{$t$} (tr);
\draw[->] (bl) to node[below,font=\small]{$-\cdot m_1$} (br);
\end{tikzpicture}
\end{center}
commutes (and the vertical maps are homotopy equivalences), so we have a homotopy equivalence $\tel(\cM) \emb \tel(C(E^m,\iota))$. By Proposition \ref{cGroupCompletion2}, the Quillen plus-construction $\tel(\cM)^+$ is a simple space. Hence $\tel(C(E^m,\iota))^+$ is also simple.
\end{proof}

%
%

\section{Manifolds admitting boundary}\label{ss-open}

We begin with some notation for relative configuration and section spaces. Let $N$ be a smooth manifold with boundary $\partial N$ and let $A\subseteq N$ be a subset.

\begin{definition}\label{dRelativeConfigSection}
Define the relative configuration space $C(N,A)$ to be the quotient of $C(N)$ in which we identify two configurations if they agree on $N\smallsetminus A$. Define the relative section space $\Gamma(N,A)$ to be the space of sections $s$ of $\dot TN|_{N\smallsetminus A} \to N\smallsetminus A$ such that $\{ x\in N\smallsetminus A \;|\; s(x)\neq\infty \}$ is contained in a compact subset of $N$ and such that $s(x)=\infty$ for all $x\in\partial N$. Note that if $N\smallsetminus A$ is a codimension-$0$ submanifold of $N$ which is closed as a subspace, then $\Gamma(N,A)$ is all compactly-supported sections of $\dot{T}(N\smallsetminus A)$ which restrict to $\infty$ on $\partial N\smallsetminus A \subseteq \partial (N\smallsetminus A)$ but not necessarily on $\partial (N\smallsetminus A) \smallsetminus \partial N$. When $A=\varnothing$ we write $C(N,\varnothing)=C(N)$ and $\Gamma(N,\varnothing)=\Gamma(N)$. This agrees with the previous definition of $\Gamma(N)$ when $N$ has empty boundary. There are natural maps $\pi\colon C(N)\to C(N,A)$ and $r\colon \Gamma(N)\to\Gamma(N,A)$, defined by $\pi(c)=[c]$ and $r(\sigma)=\sigma|_{N\smallsetminus A}$.
\end{definition}

Now let $M$ be any connected, smooth, non-compact manifold of dimension $m\geq 2$ and pick an embedding $e\colon M\emb (-1,1)^d\times\bR$ satisfying \eqref{eEmbeddingCondition}. As usual we write $\hat{M} = e(M)$.

\begin{definition}
Define $\mbar = \hat{M}\cup ([-1,1]^{m-1} \times (-3,\infty])$ and $\mbar_t = \mbar \cap \pi_{d+1}^{-1}((-\infty,t))$. Note that $\mathrm{int}(\mbar_t) = \hat{M}_t$. Define $\Gamma(\mbar,e)$ analogously to $\Gamma(M,e)$, in other words,
\[
\Gamma(\mbar,e) = \{ (\sigma,t)\in \Gamma(\mbar)\times [0,\infty) \mid \mathrm{supp}(\sigma)\subseteq \mbar_t \}.
\]
Also let $B_1 = (-1,1)^{m-1} \times [-1,\infty)$ and $B_2 = [-1,1]^{m-1} \times (-2,\infty]$ and define
\begin{align*}
\relC(M,e) &= C(\hat{M},B_1) \\
\relG(M,e) &= \Gamma(\hat{M},\mathring{B}_2) \qquad\text{where }\mathring{B}_2 = \mathrm{int}(B_2) \\
\relG(\mbar,e) &= \Gamma(\mbar,B_2).
\end{align*}
There are maps $C(M,e)\to C(M)$, $\Gamma(M,e)\to \Gamma(M)$ and $\Gamma(\mbar,e)\to \Gamma(\mbar)$ which forget the extra parameter $t$. Composing these with the maps $\pi$ and $r$ from Definition \ref{dRelativeConfigSection} we obtain maps $\pi\colon C(M,e)\to \relC(M,e)$, $r\colon \Gamma(M,e)\to \relG(M,e)$ and $\bar{r}\colon \Gamma(\mbar,e)\to \relG(\mbar,e)$.

There is a map $i\colon \Gamma(M,e) \to \Gamma(\mbar,e)$ given by extending a compactly-supported section of $\hat{M}$ by $\infty$ on $\partial\mbar$. Choosing an isotopy from the identity to an embedding $\mbar\emb\mbar$ taking $\partial\mbar$ into the interior determines a homotopy inverse for $i$. One can similarly define a homotopy equivalence $\reli\colon \relG(M,e) \to \relG(\mbar,e)$, and these commute: $\bar{r}\circ i=\reli\circ r$.

Now also choose a Riemannian metric $g$ on $M$ and a strict lower bound $\rho\colon M\to (0,1)$ for its injectivity radius satisfying conditions \eqref{eCondition1} and \eqref{eCondition2}. In \S\ref{sssStabScanMaps} we defined stabilization and scanning maps depending on this auxiliary data. The stabilization map $T\colon \Gamma(M,e)\to \Gamma(M,e)$ can be extended to a map $T\colon \Gamma(\mbar,e)\to\Gamma(\mbar,e)$ using exactly the same formula as in Definition \ref{dStabilizationMapSection}, so that $T\circ i=i\circ T$.
\end{definition}

\begin{lemma}
There is a well-defined map $\rels\colon \relC(M,e) \to \relG(M,e)$ making the left-hand square below commute on the nose.
\begin{equation}\label{eRelativeScanningSquare}
\centering
\begin{split}
\begin{tikzpicture}
[x=1mm,y=1mm]
\node (tl) at (0,15) {$C(M,e)$};
\node (tm) at (30,15) {$\Gamma(M,e)$};
\node (tr) at (60,15) {$\Gamma(\mbar,e)$};
\node (bl) at (0,0) {$\relC(M,e)$};
\node (bm) at (30,0) {$\relG(M,e)$};
\node (br) at (60,0) {$\relG(\mbar,e)$};
\draw[->] (tl) to node[above,font=\small]{$s$} (tm);
\draw[->] (tm) to node[above,font=\small]{$i$} node[below,font=\small]{$\simeq$} (tr);
\draw[->] (bl) to node[below,font=\small]{$\rels$} (bm);
\draw[->] (bm) to node[below,font=\small]{$\reli$} node[above,font=\small]{$\simeq$} (br);
\draw[->] (tl) to node[left,font=\small]{$\pi$} (bl);
\draw[->] (tm) to node[right,font=\small]{$r$} (bm);
\draw[->] (tr) to node[right,font=\small]{$\bar{r}$} (br);
\end{tikzpicture}
\end{split}
\end{equation}
\end{lemma}

\begin{proof}
Let $(c_i,t_i)\in C(M,e)$ for $i=1,2$, so we have two sections $\sigma_i = r\circ s(c_i,t_i)$ of $\dot{T}(M\smallsetminus B_2)$. Let $p\in M\smallsetminus B_2$. We need to show that $[c_1]=[c_2]$ implies that $\sigma_1(p)=\sigma_2(p)$. Note that, by definition of the scanning map $s$, $\sigma_i(p)$ depends only on $c_i \cap B_{\rho(p)}(p)$, where $\rho\colon M\to (0,1)$ was part of the auxiliary data used to define scanning. But conditions \eqref{eCondition1} and \eqref{eCondition2} imply that $B_{\rho(p)}(p)$ is contained in $M\smallsetminus B_1$ and by assumption, $c_1$ and $c_2$ agree on $M\smallsetminus B_1$.
\end{proof}

Let $\tel(C(M,e))$ be the mapping telescope of the infinite sequence of stabilization maps $t\colon C(M,e) \to C(M,e) \to C(M,e) \to \cdots$. Note that $\pi\circ t = \pi$, so there is a well-defined map $\pi_\tel \colon \tel(C(M,e)) \to \relC(M,e)$. We can similarly use the stabilization maps for $\Gamma(M,e)$ and $\Gamma(\mbar,e)$ to obtain maps $r_\tel \colon \tel(\Gamma(M,e)) \to \relG(M,e)$ and $\bar{r}_\tel \colon \tel(\Gamma(\mbar,e)) \to \relG(\mbar,e)$. The scanning map commutes with the stabilization maps (\S\ref{sssCommutativity}), as does the map $i$, so there are induced maps $s_\tel$ and $i_\tel$ and a commutative square
\begin{equation}\label{eRelativeScanningSquareTel}
\centering
\begin{split}
\begin{tikzpicture}
[x=1mm,y=1mm]
\node (tl) at (0,15) {$\tel(C(M,e))$};
\node (tm) at (35,15) {$\tel(\Gamma(M,e))$};
\node (tr) at (70,15) {$\tel(\Gamma(\mbar,e))$};
\node (bl) at (0,0) {$\relC(M,e)$};
\node (bm) at (35,0) {$\relG(M,e)$};
\node (br) at (70,0) {$\relG(\mbar,e).$};
\draw[->] (tl) to node[above,font=\small]{$s_\tel$} (tm);
\draw[->] (tm) to node[above,font=\small]{$i_\tel$} node[below,font=\small]{$\simeq$} (tr);
\draw[->] (bl) to node[below,font=\small]{$\rels$} (bm);
\draw[->] (bm) to node[below,font=\small]{$\reli$} node[above,font=\small]{$\simeq$} (br);
\draw[->] (tl) to node[left,font=\small]{$\pi_\tel$} (bl);
\draw[->] (tm) to node[right,font=\small]{$r_\tel$} (bm);
\draw[->] (tr) to node[right,font=\small]{$\bar{r}_\tel$} (br);
\end{tikzpicture}
\end{split}
\end{equation}

There are four ingredients for the proof in this section of Theorem \ref{acyclic}: one of these is the special case of Theorem \ref{acyclic} proved in \S\ref{ss-r2crossn} where $(M,e) = (E^m,\iota)$. The other three are the following lemma:

\begin{lemma}\label{lInputs}
In the square \eqref{eRelativeScanningSquareTel},
\begin{itemize2}
\item[\upshape{(a)}] $\pi_\tel$ is an abelian homology fibration,
\item[\upshape{(b)}] $\bar{r}_\tel$ is a quasifibration,
\item[\upshape{(c)}] $\rels$ is a weak equivalence.
\end{itemize2}
\end{lemma}

Before proving these we will show how they imply Theorem \ref{acyclic}.

\begin{proof}[Proof of Theorem \ref{acyclic}]
Recall that $E^m = (-1,1)^{m-1} \times \bR$ and $\iota$ is the inclusion of this into $(-1,1)^d\times \bR$. In \S\ref{ss-r2crossn} we proved Theorem \ref{acyclic} for $(M,e)=(E^m,\iota)$, namely that the scanning map $s_\tel \colon \tel(C(E^m,\iota)) \to \tel(\Gamma(E^m,\iota))$ is acyclic.

Let $\varnothing\in \relC(M,e)$ denote the empty relative configuration and let $\varnothing\in \relG(M,e)$ and $\varnothing\in \relG(\mbar,e)$ denote the infinity sections of $\dot{T}(\hat{M} \smallsetminus \mathring{B}_2)$ and of $\dot{T}(\mbar\smallsetminus B_2)$ respectively. The point-set fiber over $\varnothing\in \relC(M,e)$ in \eqref{eRelativeScanningSquare} is
\begin{align*}
\pi^{-1}(\varnothing) &= \{ (c,t)\in C(\hat{M})\times [0,\infty) \mid c\subseteq B_1 \cap \hat{M}_t \} \quad\subset C(M,e) \\
&\cong \{ (c,t)\in C(\hat{E}^m)\times [0,\infty) \mid c\subseteq B_1 \cap \hat{E}^m_t \} \quad\subset C(E^m,\iota),
\end{align*}
and similarly the point-set fiber $r^{-1}(\varnothing)$ may be thought of as a subspace of $\Gamma(E^m,\iota)$. Moreover, the inclusions $\pi^{-1}(\varnothing) \emb C(E^m,\iota)$ and $r^{-1}(\varnothing) \emb \Gamma(E^m,\iota)$ are homotopy equivalences: in each case one can construct a homotopy inverse using an isotopy $h_t$ of embeddings $\hat{E}^m \emb \hat{E}^m$ from the identity to an embedding $h_1$ with image equal to $(-1,1)^{m-1} \times (-1,\infty)$.

The stabilization map $t\colon C(E^m,\iota) \to C(E^m,\iota)$ restricts to an endomorphism of $\pi^{-1}(\varnothing)$ so we can form the mapping telescope $\tel(\pi^{-1}(\varnothing))$ with respect to this endomorphism and the inclusion $\tel(\pi^{-1}(\varnothing)) \emb \tel(C(E^m,\iota))$ is a homotopy equivalence.\footnote{Since the homotopy inverse to the inclusion $\pi^{-1}(\varnothing) \emb C(E^m,\iota)$ can be constructed to commute with the stabilization map $t$ by ensuring that the isotopy $h_t$ is the identity on $(-1,1)^{m-1} \times [0,\infty)$ at all times $t$.} Similarly, using the stabilization map $T$ for section spaces, we have a mapping telescope $\tel(r^{-1}(\varnothing))$ and an inclusion $\tel(r^{-1}(\varnothing)) \emb \tel(\Gamma(E^m,\iota))$ which is a homotopy equivalence. The following square commutes on the nose:
\begin{center}
\begin{tikzpicture}
[x=1mm,y=1mm]
\node (tl) at (0,15) {$\pi^{-1}(\varnothing)$};
\node (tr) at (40,15) {$C(E^m,\iota)$};
\node (bl) at (0,0) {$r^{-1}(\varnothing)$};
\node (br) at (40,0) {$\Gamma(E^m,\iota).$};
\incl{(tl)}{(tr)}
\incl{(bl)}{(br)}
\draw[->] (tl) to node[left,font=\small]{$s_\varnothing$} (bl);
\draw[->] (tr) to node[right,font=\small]{$s$} (br);
\end{tikzpicture}
\end{center}
A little care is needed here: implicitly we have chosen a Riemannian metric $g_M$ and a function $\rho_M\colon M\to (0,1)$ to define the scanning map $C(M,e)\to \Gamma(M,e)$, which restricts to the map $s_\varnothing$ in this square. We then choose the corresponding data $(g_E,\rho_E)$, which determines $C(E^m,\iota)\to \Gamma(E^m,\iota)$, to agree with $(g_M,\rho_M)$ on $\mathring{B}_2$, which ensures that the square commutes. Hence the square after taking mapping telescopes:
\begin{equation}\label{eComparingFibers1}
\centering
\begin{split}
\begin{tikzpicture}
[x=1mm,y=1mm]
\node (tl) at (0,15) {$\pi_{\tel}^{-1}(\varnothing) = \tel(\pi^{-1}(\varnothing))$};
\node (tr) at (45,15) {$\tel(C(E^m,\iota))$};
\node (bl) at (0,0) {$r_{\tel}^{-1}(\varnothing) = \tel(r^{-1}(\varnothing))$};
\node (br) at (45,0) {$\tel(\Gamma(E^m,\iota))$};
\incl{(tl)}{(tr)}
\incl{(bl)}{(br)}
\draw[->] ($ (tl.south) + (-12,0) $) to node[left,font=\small]{$(s_\tel)_\varnothing$} ($ (bl.north) + (-12,0) $);
\draw[->] (tr) to node[right,font=\small]{$s_\tel$} (br);
\end{tikzpicture}
\end{split}
\end{equation}
also commutes. The map $s_\tel$ is acyclic by \S\ref{ss-r2crossn} and the horizontal maps are homotopy equivalences, so $(s_\tel)_\varnothing$ is also acyclic.

Now consider the squares
\begin{center}
\begin{tikzpicture}
[x=1mm,y=1mm]
\node (tl) at (0,15) {$\tel(\pi^{-1}(\varnothing))$};
\node (tm) at (35,15) {$\tel(r^{-1}(\varnothing))$};
\node (tr) at (70,15) {$\tel(\bar{r}^{-1}(\varnothing))$};
\node (bl) at (0,0) {$\hofib_\varnothing(\pi_\tel)$};
\node (bm) at (35,0) {$\hofib_\varnothing(r_\tel)$};
\node (br) at (70,0) {$\hofib_\varnothing(\bar{r}_\tel)$};
\draw[->] (tl) to node[above,font=\small]{$(s_\tel)_\varnothing$} (tm);
\draw[->] (tm) to node[above,font=\small]{$(i_\tel)_\varnothing$} (tr);
\draw[->] (bl) to node[below,font=\small]{$\hat{s}$} (bm);
\draw[->] (bm) to node[below,font=\small]{$\hat{\imath}$} (br);
\draw[->] (tl) to node[left,font=\small]{$i$} (bl);
\draw[->] (tm) to (bm);
\draw[->] (tr) to node[right,font=\small]{$j$} (br);
\end{tikzpicture}
\end{center}
which compare the set-theoretic and homotopy fibers of the squares \eqref{eRelativeScanningSquareTel}. By Lemma \ref{lInputs}, $i$ is an abelian homology equivalence and $j$ is a weak equivalence, and by above $(s_\tel)_\varnothing$ is acyclic. The map $(i_\tel)_\varnothing$ may be compared to the map $i_\tel$ for $(M,e)=(E^m,\iota)$ similarly to \eqref{eComparingFibers1}, so it is a homotopy equivalence. Hence the bottom horizontal map $\hat{\imath}\circ\hat{s}$ is also an abelian homology equivalence. Moreover, recall from Lemma \ref{lSimple} that $\Gamma(E^m,\iota)$ is a simple space, in particular its fundamental group is abelian. Therefore the same is true for $r^{-1}(\varnothing)$, therefore also for each path-component of $\tel(r^{-1}(\varnothing))$, and therefore also for each path-component of $\hofib_\varnothing(\bar{r}_\tel)$. So all local coefficient systems on $\hofib_\varnothing(\bar{r}_\tel)$ are abelian, so the map $\hat{\imath}\circ\hat{s}$ is in fact acyclic.

Finally, consider the diagram obtained from the outer square of \eqref{eRelativeScanningSquareTel} by taking homotopy fibers horizontally and vertically. By Lemma \ref{lInputs}, $\rels$ is a weak equivalence, and therefore so is the map $\hofib(\hat{\imath}\circ\hat{s}) \to \hofib(i_\tel \circ s_\tel)$ in this diagram. Since acyclicity of a map $f$ is equivalent to $\hofib(f)$ having the integral (untwisted) homology of a point, the fact that $\hat{\imath}\circ\hat{s}$ is acyclic implies that $i_\tel \circ s_\tel$ is also acyclic. But $i_\tel$ is a homotopy equivalence, so $s_\tel$ is acyclic, as required.
\end{proof}

It now remains to prove Lemma \ref{lInputs}. We prove part (a) first, and begin by describing the mapping telescope $\tel(C(M,e))$ a little more.

\begin{definition}
Define $C_\infty(M,e)$ to be the quotient of
\[
\{ (c,t,u)\in C(\hat{M})\times [0,\infty)^2 \mid c\subseteq \hat{M}_t \text{ and } \lvert c\rvert \leq u \leq \lvert c\rvert +1 \}
\]
by the equivalence relation $\sim$ which is the reflexive, symmetric closure of
\[
(c,t,\lvert c\rvert) \sim (c\cup\{p_t\},t+1,\lvert c\rvert +1),
\]
where for $t\in [0,\infty)$, $p_t \coloneqq (\mathbf{0},t+\frac12)\in\hat{M}$. This is homeomorphic to the mapping telescope of the sequence of stabilization maps $C_0(M,e) \to C_1(M,e) \to \cdots$, which was denoted in \S\ref{ss-r2crossn} by $\tel(C_k(M,e))$. There is a descending filtration $C_\infty(M,e) \supset C_{\infty-1}(M,e) \supset \cdots \supset C_{\infty-k}(M,e) \supset \cdots$ defined by
\[
C_{\infty-k}(M,e) = \{ [c,t,u]\in C_\infty(M,e) \mid \lvert c\rvert \geq k \}.
\]
There is a small ambiguity here: to make this unambiguous we specify that the element $[c,t,\lvert c\rvert] = [c\cup\{p_t\},t+1,\lvert c\rvert +1]$ \emph{is} included in $C_{\infty-k}(M,e)$ when $\lvert c\rvert = k-1$. This corresponds to the mapping telescope of the subsequence $C_k(M,e) \to C_{k+1}(M,e) \to C_{k+2}(M,e) \to \cdots$.
\end{definition}

The mapping telescope $\tel(C(M,e))$ is homeomorphic to the disjoint union
\[
\bigsqcup_{k\in\bZ} C_{\infty - \mathrm{max}(k,0)}(M,e),
\]
and we will often talk of the element $[c,t,u]$ of $\tel(C(M,e))$, eliding the $k$ which specifies the path-component. Under this identification, on each path-component $C_{\infty-k}(M,e)$ the map $\pi_\tel$ restricts to the map $C_{\infty-k}(M,e) \to \relC(M,e)$ given by $[c,t,u] \mapsto [c]$.

\begin{lemma}
The map $\pi_\tel$ is an abelian homology fibration.
\end{lemma}

\begin{proof}
Write $Y=\tel(C(M,e))$ and $X=\relC(M,e)$. Define
\[
X_n = \{ [c]\in \relC(M,e) \mid \lvert c\smallsetminus B_1 \rvert \leq n \}.
\]
This is an increasing filtration of $X$ by closed subsets and each compact subset of $X$ is contained in some $X_n$. Also, note that both $X$ and $Y$ are Hausdorff: for $X=\relC(M,e)=C(\hat{M},B_1)$ this uses the fact that $B_1$ is closed in $\hat{M}$ and that $\hat{M}$ is $T_3$ (since it is Hausdorff and locally compact). So we just need to verify the conditions (i)--(iii) of Proposition \ref{strongcrit} in this case.

Since $X_0$ is a point, $\pi_\tel$ is vacuously an abelian homology fibration over it. For all $n\geq 1$ the preimage $\pi_\tel^{-1}(X_n\smallsetminus X_{n-1})$ is the subspace of $Y$ of elements $[c,t,u]$ where the configuration $c$ has exactly $n$ points in $\hat{M} \smallsetminus B_1$. In this subspace no points can pass between $B_1$ and $\hat{M} \smallsetminus B_1$, so $\pi_\tel$ restricted to this subspace is a trivial fiber bundle over $X_n \smallsetminus X_{n-1}$, so in particular an abelian homology fibration over $X_n \smallsetminus X_{n-1}$. This verifies condition (ii). 

For condition (iii), define $U_n$ to be the open subset of $X_n$ consisting of those configurations $c$ with $\lvert c\smallsetminus B_1 \rvert \leq n$ and $\lvert c\smallsetminus \balltwo \rvert \leq n-1$. Let $f_t\colon (-3,0)\to (-3,0)$ be the function:
\begin{center}
\begin{tikzpicture}
[x=1mm,y=1mm,scale=2]
\draw[dashed] (2.5,7.5)--(2.5,0);
\draw[dashed] (2.5,7.5)--(0,7.5);
\draw (0,0)--(0,10);
\draw (0,0)--(10,0);
\node at (3.5,0) [anchor=north,font=\scriptsize,black!50]{$-\frac12(3+t)$};
\node at (10,0) [anchor=north,font=\scriptsize,black!50]{$0$};
\node at (0,0) [anchor=east,font=\scriptsize,black!50]{$-3$};
\node at (0,7.5) [anchor=east,font=\scriptsize,black!50]{$-\frac12(3-t)$};
\node at (0,10) [anchor=east,font=\scriptsize,black!50]{$0$};
\foreach \x / \y in {0/0, 2.5/7.5, 10/10, 0/7.5, 0/10, 10/0, 2.5/0} {\node at (\x,\y) [circle,fill,inner sep=1pt]{};}
\draw (0,0)--(2.5,7.5)--(10,10);
\end{tikzpicture}
\end{center}
This induces an automorphism $g_t$ of $\hat{M}$ which is $\mathrm{id}\times f_t$ on $(-1,1)^{m-1}\times (-3,0)$ and the identity elsewhere. We can then define the homotopies $h_t$ and $H_t$ needed for condition (iii) by simply applying $g_t$ to each point of the configuration or relative configuration. Properties (a) and (b) of these homotopies are immediate from their definition.

To check property (c) we will show that for $x\in U_k$ the map $H_1 \colon \pi_\tel^{-1}(x) \to \pi_\tel^{-1}(h_1(x))$ is an abelian homology equivalence. Recall from the proof of Theorem \ref{acyclic} above that there is an inclusion $\pi_{\tel}^{-1}(\varnothing) = \tel(\pi^{-1}(\varnothing)) \hookrightarrow \tel(C(E^m,\iota))$ which is a homotopy equivalence. Also, recall from the proof of Lemma \ref{lSimple} that there is a homotopy-commutative monoid $\cM$ with $\pi_0$ equal to $\bN$ and an element $m_1 \in \cM$ (so that $[m_1]\in\pi_0(\cM)$ is the generator) with the following property. If we write
\[
\tel(\cM) = \tel(\cM \to \cM \to \cdots)
\]
where each map $\cM \to \cM$ is right-multiplication by $m_1$, then there is an inclusion $\tel(\cM) \hookrightarrow \tel(C(E^m,\iota))$ which is a homotopy equivalence. Also note that the fibers of $\pi_\tel$ are all canonically homeomorphic (this of course does not mean that it is locally trivial). Putting this together we have the following (without the dotted arrows):
\begin{equation}\label{eConditionC}
\centering
\begin{split}
\begin{tikzpicture}
[x=1mm,y=1mm]
\node (t1) at (0,15) {$\pi_{\tel}^{-1}(x)$};
\node (t2) at (20,15) {$\pi_{\tel}^{-1}(\varnothing)$};
\node (t3) at (55,15) {$\tel(C(E^m,\iota))$};
\node (t4) at (85,15) {$\tel(\cM)$};
\node (b1) at (0,0) {$\pi_{\tel}^{-1}(h_1(x))$};
\node (b2) at (20,0) {$\pi_{\tel}^{-1}(\varnothing)$};
\node (b3) at (55,0) {$\tel(C(E^m,\iota))$};
\node (b4) at (85,0) {$\tel(\cM)$};
\draw[->] (t1) to node[left,font=\small]{$H_1$} (b1);
\node at (11.5,0) {$\cong$};
\node at (10,15) {$\cong$};
\incl[left]{(t4)}{(t3)}
\incl[left]{(b4)}{(b3)}
\incl{(t2)}{(t3)}
\incl{(b2)}{(b3)}
\draw[->,dashed] (t3) to (b3);
\draw[->,dashed] (t4) to (b4);
\node at (35,-2) [font=\small] {$\simeq$};
\node at (35,17) [font=\small] {$\simeq$};
\node at (73,-2) [font=\small] {$\simeq$};
\node at (73,17) [font=\small] {$\simeq$};
\end{tikzpicture}
\end{split}
\end{equation}

Now $x=[c]\in \relC(M,e)$. Although $[c]$ is a relative configuration, it determines well-defined subconfigurations $c\smallsetminus B_1$ and
\[
g_1(c)\smallsetminus B_1 = c\smallsetminus g_1^{-1}(B_1) = c\smallsetminus \mathrm{cl}(\mathring{B}_2),
\]
so we may define $\ell_x \coloneqq \lvert c\smallsetminus B_1 \rvert - \lvert g_1(c)\smallsetminus B_1 \rvert \in\bN$. This is the number of points of the (relative) configuration $x$ which are pushed into $B_1$ during the homotopy $H_t$. The diagram \eqref{eConditionC} may be completed with dotted arrows so that it homotopy-commutes, and moreover the arrow $\tel(\cM) \to \tel(\cM)$ is the map induced by left-multiplication $(m_1)^{\ell_x}\cdot - \colon \cM \to \cM$.

In \cite[\S 2]{Randal-Williams2013Groupcompletion} it is shown that for a homotopy-commutative monoid $\cM$ and element $m_1\in\cM$ so that $[m_1]$ generates $\pi_0(\cM)=\bN$,\footnote{The fact proved in \cite[\S 2]{Randal-Williams2013Groupcompletion} is more general, but slightly more complicated to state, than what we state here. In particular it does not require that $\pi_0(\cM)=\bN$.} we have the following: for any element $m\in\cM$ the endomorphism $\tel(\cM) \to \tel(\cM)$ induced by left-multiplication $m\cdot -\colon \cM\to\cM$ is an abelian homology equivalence. Applying this to the above situation we see that the right-hand vertical arrow of \eqref{eConditionC} is an abelian homology equivalence, and therefore so is $H_1 \colon \pi_\tel^{-1}(x) \to \pi_\tel^{-1}(h_1(x))$, proving property (c). This completes the verification of condition (iii).

To verify condition (i) we will show that $\pi_\tel$ is locally stalk-like over each $X_n$. There is a basis for $X_n$ consisting of the following open sets. Choose $j\leq n$ small non-overlapping open discs in the manifold $\hat{M}\smallsetminus B_1$ and a real number $\epsilon\in (-3,-1)$ such that no disc intersects $(-1,1)^{m-1}\times (\epsilon,-1)$. Then take the subspace $V$ of $X_n$ of relative configurations where each of the discs contains exactly one point, the strip $(-1,1)^{m-1}\times (\epsilon,-1)$ contains any configuration of at most $n-j$ points and the rest of the manifold $\hat{M}\smallsetminus B_1$ is empty. This subset $V$ is clearly contractible: one can deformation retract it onto the configuration $v_0\in V$ in which the point in each disc is at its center and the strip $D\times (\epsilon,-1)$ is empty. The collection of all such $V$ forms a basis for $X_n$.

We now show that the inclusion $\pi_{\tel}^{-1}(v_0) \hookrightarrow \pi_{\tel}^{-1}(V)$ is a weak equivalence, in fact a homotopy equivalence: we describe a homotopy inverse which lifts the deformation retraction of $V$ onto $\{v_0\}$.\footnote{This is not however a deformation retraction of $\pi_{\tel}^{-1}(V)$ onto $\pi_{\tel}^{-1}(v_0)$ since the subspace $\pi_{\tel}^{-1}(v_0)$ is not fixed during the homotopy.} Observe that $\pi_{\tel}^{-1}(v_0)$ consists of elements $[c,t,u]\in \tel(C(M,e))$ such that
\begin{align*}
c\smallsetminus B_1 =\ &c\smallsetminus ((-1,1)^{m-1} \times [-1,\infty)) \\
\intertext{is precisely the $j$ centers of the discs. On the other hand $\pi_{\tel}^{-1}(V)$ consists of elements $[c,t,u]\in \tel(C(M,e))$ such that}
&c\smallsetminus ((-1,1)^{m-1} \times (\epsilon,\infty))
\end{align*}
consists of one point in each disc. There is a homotopy from the identity on $\pi_{\tel}^{-1}(V)$ to a self-map with image inside $\pi_{\tel}^{-1}(v_0)$ given by gradually pushing each point in a disc towards the center of that disc, and gradually pushing the configuration in $(-1,1)^{m-1} \times (\epsilon,\infty)$ to the right until it lies in $(-1,1)^{m-1} \times (-1,\infty)$.\footnote{Note that it is important that we do not need $\pi_{\tel}^{-1}(v) \hookrightarrow \pi_{\tel}^{-1}(V)$ to be a weak equivalence for \emph{all} $v\in V$, since the previous argument does not work if the configuration $v$ has non-empty intersection with the strip $(-1,1)^{m-1} \times (\epsilon,-1)$.}

This proves that $\pi_{\tel}$ is locally stalk-like over $X_n$, so we have verified condition (i) of Proposition \ref{strongcrit}. Hence $\pi_{\tel}$ is an abelian homology fibration.
\end{proof}

We now turn to part (b) of Lemma \ref{lInputs}, for which we use the following general fact.

\begin{lemma}\label{lRestrictionSerre}
Let $p\colon Y\to X$ be a Serre fibration with a chosen section $s_0\colon X\to Y$. Let $A\subseteq X$ be a relatively compact subset such that $(X,X\smallsetminus A)$ is a relative CW-complex, and let $B\subseteq X\smallsetminus A$ be any subset. Denote by $\Gamma_B(p)$ the space of all compactly-supported sections of $p$ which agree with $s_0$ on $B$ and by $\Gamma_B(p,A)$ the space of all sections of $p^{-1}(X\smallsetminus A) \to X\smallsetminus A$ whose support is contained in a compact subset of $X$ and which agree with $s_0$ on $B$. Then the restriction map $\Gamma_B(p) \to \Gamma_B(p,A)$ is a Serre fibration.
\end{lemma}
\begin{proof}
Any lifting problem for the restriction map induces a lifting problem for $p$, as below:
\begin{center}
\begin{tikzpicture}
[x=1mm,y=1mm,font=\small]
\begin{scope}
\node (tl) at (0,15) {$D^n\times \{0\}$};
\node (tr) at (25,15) {$\Gamma_B(p)$};
\node (bl) at (0,0) {$D^n\times [0,1]$};
\node (br) at (25,0) {$\Gamma_B(p,A)$};
\draw[->] (tl) to (tr);
\draw[->] (bl) to (br);
\incl{(tl)}{(bl)}
\draw[->] (tr) to (br);
\end{scope}
\node at (37,7.5) {$\rightsquigarrow$};
\begin{scope}[xshift=70mm]
\node (tl) at (0,15) {$D^n \times (((X\smallsetminus A)\times [0,1]) \cup (X\times \{0\}))$};
\node (tr) at (40,15) {$Y$};
\node (bl) at (0,0) {$D^n\times (X\times [0,1])$};
\node (br) at (40,0) {$X$};
\draw[->] (tl) to (tr);
\draw[->] (bl) to (br);
\incl{(tl)}{(bl)}
\draw[->] (tr) to node[right]{$p$} (br);
\end{scope}
\end{tikzpicture}
\end{center}
In the right-hand square the left vertical map is the inclusion of a relative CW-complex and $p$ is a Serre fibration, so there is a diagonal map for this square. This gives a map $D^n \times [0,1] \to \Gamma^{\mathit{nc}}_B(p)$, where $\Gamma^{\mathit{nc}}_B(p)$ is the space of \emph{all} (not necessarily compact) sections of $p$ which agree with $s_0$ on $B$. If this map lands in the subspace $\Gamma_B(p)$, we will have found a diagonal for the left-hand square above.

In other words: the diagonal map $D^n \times (X\times [0,1]) \to Y$ restricted to a point in $D^n \times [0,1]$ is a section of $p$, and we need to show that it has compact support. But its restriction to $X\smallsetminus A$ has support contained in a compact subset $K$ of $X$, so the support of the whole section must be contained in $K\cup \bar{A}$, which is compact since $A$ is relatively compact in $X$.
\end{proof}

We also note that a mapping telescope of quasifibrations over a fixed base is again a quasifibration: more precisely for a sequence of spaces $Y_1 \to Y_2 \to \dotsb$ and quasifibrations $f_i\colon Y_i \to X$ commuting with the maps $Y_i\to Y_{i+1}$, the induced map $\mathit{Tel}(f_i)\colon \mathit{Tel}(Y_i)\to X$ is a quasifibration. This is because (a) for any basepoint $x\in X$ we have $\hofib_x(\mathit{Tel}(f_i)) = \mathit{Tel}(\hofib_x(f_i))$ and the subspace $(\mathit{Tel}(f_i))^{-1}(x)$ is the sub-mapping telescope $\mathit{Tel}(f_i^{-1}(x))$, and (b) a telescope of weak equivalences is a weak equivalence.

The map $\bar{r}\colon \Gamma(\mbar,e)\to \relG(\mbar,e)$ factors as
\[
\Gamma(\mbar,e) \longrightarrow \Gamma(\mbar) \longrightarrow \Gamma(\mbar,\mathring{B}_2) \longrightarrow \Gamma(\mbar,B_2)=\relG(\mbar,e).
\]
Applying Lemma \ref{lRestrictionSerre} to the fiber bundle $\dot{T}\mbar \to \mbar$ with the section at infinity, $A=\mathring{B}_2$ and $B=\partial\mbar$ we see that the middle map is a Serre fibration. The first map is a Hurewicz fibration: given a lifting problem $f\colon X\times\{0\} \to \Gamma(\mbar,e)$, $g\colon X\times [0,1] \to \Gamma(\mbar)$ we can define a lift $h\colon X\times [0,1]\to \Gamma(\mbar,e)$ by
\[
h(x,t) = (g(x,t), m(g(x,t)) + \mathrm{pr}_2(f(x,0)) - m(\mathrm{pr}_1(f(x,0))) ),
\]
where $m\colon \Gamma(\mbar)\to [0,\infty)$ takes a configuration to the maximum of the $(d+1)$st coordinates of all its points and $0$. The third map is a homeomorphism, with inverse given by extending a section by $\infty$ on $\partial\mbar \cap B_2$.

Applying the above remark to the sequence of stabilization maps $\Gamma(\mbar,e) \to \Gamma(\mbar,e) \to \cdots$ this implies that the map $\bar{r}_\tel \colon \tel(\Gamma(\mbar,e)) \to \relG(\mbar,e)$ is a quasifibration. Thus we have proved part (b) of Lemma \ref{lInputs}.

As a final input for the proof of Theorem \ref{acyclic} we need to prove part (c) of Lemma \ref{lInputs}, namely that the relative scanning map
\[
\rels \colon \relC(M,e) = C(\hat{M},B_1) \longrightarrow \Gamma(\hat{M},\balltwo) = \relG(M,e)
\]
is a weak equivalence.

\begin{proposition}\label{pRelativeScanning}
The relative scanning map $\rels$ is a weak equivalence.
\end{proposition}

\begin{proof}
This essentially follows from Proposition 2 of \cite{B}. However, no explicit description of the relative scanning map is given there, so we will follow \S 2.3 of \cite{Hesselholt1992} which contains an explicit description and generalizes Proposition 2 of \cite{B} to configurations with labels in a bundle over the manifold (although we will not need this).

Choose an increasing filtration of $\hat{M}$ by compact submanifolds-with-boundary $N_n$. Moreover, choose these so that
\[
N_n \cap \pi_{d+1}^{-1}((-3,\infty)) = [\tfrac{1-n}{n},\tfrac{n-1}{n}]^{m-1} \times (-3,n].
\]
Also define $P_n = B_1 \cap N_n = [\frac{1-n}{n},\frac{n-1}{n}]^{m-1} \times [-1,n]$. Write $s^0$ for the trivial $S^0$-bundle over $N_n$. In general, for a manifold $N$ with boundary $\partial N$, we write $N^{\sfo} = N \cup (\partial N\times [0,1))$. We then have
\begin{align*}
C(N_n,P_n) &\cong C(N_n,P_n;s^0) \\
\Gamma(N_n,P_n) &\cong \Gamma(N_n^{\sfo}\smallsetminus P_n , N_n^{\sfo}\smallsetminus N_n;s^0),
\end{align*}
where the left-hand side is our notation from Definition \ref{dRelativeConfigSection} and the right-hand side is the notation of \cite{Hesselholt1992}. Since $\hat{M}\smallsetminus \mathring{B}_2 \subseteq \hat{M}\smallsetminus B_1$ there is a restriction map $\Gamma(\hat{M},B_1) \to \Gamma(\hat{M},\mathring{B}_2)$, which is clearly a homotopy equivalence. The relative scanning map defined in \cite{Hesselholt1992} (page 198) is a map
\[
\gamma\colon C(N_n,P_n;s^0) \longrightarrow \Gamma(N_n^{\sfo}\smallsetminus P_n , N_n^{\sfo}\smallsetminus N_n;s^0)
\]
which is a weak equivalence by the Proposition in \S 2.3 of \cite{Hesselholt1992}, since $(N_n,P_n)$ is connected. By inspecting the definition of $\gamma$ in \cite{Hesselholt1992} and the definition of $\rels$ at the beginning of this section one sees that the following diagram commutes up to homotopy:
\begin{center}
\begin{tikzpicture}
[x=1mm,y=1mm]
\node (tl) at (0,25) {$C(\hat{M},B_1)$};
\node (tc) at (40,25) {$\Gamma(\hat{M},\mathring{B}_2)$};
\node (tr) at (80,25) {$\Gamma(\hat{M},B_1)$};
\node (ml) at (0,10) {$C(N_n,P_n)$};
\node (mr) at (80,10) {$\Gamma(N_n,P_n)$};
\node (bl) at (0,0) {$C(N_n,P_n;s^0)$};
\node (br) at (80,0) {$\Gamma(N_n^{\sfo}\smallsetminus P_n , N_n^{\sfo}\smallsetminus N_n;s^0)$};
\draw[->] (tl) to node[above,font=\small]{$\rels$} (tc);
\draw[->] (tr) to node[above,font=\small]{$\simeq$} (tc);
\incl{(ml)}{(tl)}
\incl{(mr)}{(tr)}
\draw[->] (bl) to node[above,font=\small]{$\gamma$} (br);
\node at (0,5) {\rotatebox{90}{$=$}};
\node at (80,5) {\rotatebox{90}{$=$}};
\end{tikzpicture}
\end{center}
The spaces $C(N_n,P_n)$ form a filtration of $C(\hat{M},B_1)$ so that every compact subset is contained in some finite stage of the filtration. The map $i\colon \colim_n (C(N_n,P_n))\to C(\hat{M},B_1)$ is therefore a continuous bijection whose inverse is continuous on compact subsets -- hence a weak equivalence. There is an analogous weak equivalence $j$ when $C$ is replaced by $\Gamma$. The map $\gamma$ respects these filtrations so it induces a map $\gamma_\infty\colon \colim_n (C(N_n,P_n))\to \colim_n (\Gamma(N_n,P_n))$ which is a weak equivalence since $\colim(-)$ commutes with $\pi_*(-)$. There is a commutative pentagon formed by $i$, $j$, $\gamma_\infty$ and the top two horizontal maps of the above square. We now deduce that $\rels$ is a weak equivalence since the other four maps in the pentagon are weak equivalences.
\end{proof}

This proves part (c) of Lemma \ref{lInputs}, and therefore completes the proof of Theorem \ref{acyclic}. We can now apply this theorem together with homological stability for oriented configuration spaces to prove Theorem \ref{orientrange} for manifolds admitting boundary.

\begin{corollary}\label{rangeopen}
If $M$ admits boundary the lift $s^+ \colon C^+_k(M,e) \to \Gamma^+_k(M,e)$ of the scanning map induces an isomorphism on $H_*(-;\bZ)$ in the range $* \leq (k-5)/3$ and a surjection for $* \leq (k-2)/3$. 
\end{corollary}

\begin{proof}
We have a commutative square
\begin{center}
\begin{tikzpicture}
[x=1mm,y=1mm]
\node (tl) at (0,15) {$C_k^+(M,e)$};
\node (tr) at (30,15) {$C_{k+1}^+(M,e)$};
\node (bl) at (0,0) {$\Gamma_k^+(M,e)$};
\node (br) at (30,0) {$\Gamma_{k+1}^+(M,e)$};
\draw[->] (tl) to node[above,font=\small]{$t^+$} (tr);
\draw[->] (bl) to node[below,font=\small]{$T^+$} (br);
\draw[->] (tl) to node[left,font=\small]{$s^+$} (bl);
\draw[->] (tr) to node[right,font=\small]{$s^+$} (br);
\end{tikzpicture}
\end{center}
which double covers the square \eqref{eStabScan}. This uses the facts that (a) the oriented configuration space $C_{k+1}^+(M,e)$ pulls back to the oriented configuration space $C_k^+(M,e)$ along the stabilization map $t$ and (b) the double cover $\Gamma_k^+(M,e)$ can be characterized by the property that it pulls back to $C_k^+(M,e)$ along the scanning map $s$. Taking mapping telescopes with respect to $t^+$ and $T^+$ we get a map
\begin{equation}\label{eScanningLimitLifted}
s^+\colon \tel(C_k^+(M,e)) \longrightarrow \tel(\Gamma_k^+(M,e))
\end{equation}
which double-covers the map $s\colon \tel(C_k(M,e)) \to \tel(\Gamma_k(M,e))$. By Theorem \ref{acyclic} the latter is an acyclic map, and therefore so is any homotopy pullback of it, including \eqref{eScanningLimitLifted}. In particular \eqref{eScanningLimitLifted} is an integral homology equivalence.

Now consider the commutative square
\begin{center}
\begin{tikzpicture}
[x=1mm,y=1mm]
\node (tl) at (0,15) {$C_k^+(M,e)$};
\node (tr) at (40,15) {$\tel(C_k^+(M,e))$};
\node (bl) at (0,0) {$\Gamma_k^+(M,e)$};
\node (br) at (40,0) {$\tel(\Gamma_k^+(M,e))$};
\inclusion{above}{$\imath$}{(tl.east)}{(tr.west)}
\inclusion{below}{$\jmath$}{(bl.east)}{(br.west)}
\draw[->] (tl) to node[left,font=\small]{$s^+$} (bl);
\draw[->] (tr) to node[right,font=\small]{$s^+$} (br);
\end{tikzpicture}
\end{center}
where $\imath$ and $\jmath$ are the evident inclusions into the mapping telescopes. Since the stabilization maps $T\colon \Gamma_k(M,e) \to \Gamma_{k+1}(M,e)$ are homotopy equivalences, so is the inclusion $\Gamma_k(M,e) \emb \tel(\Gamma_k(M,e))$. Therefore $\jmath$ is a weak equivalence, since it is a homotopy pullback of this inclusion.

So the bottom and right arrows in the above square are integral homology equivalences -- hence $s^+\colon C_k^+(M,e) \to \Gamma_k^+(M,e)$ is an isomorphism on homology in the same range as $\imath$. Homological stability for oriented configuration spaces \cite{Palmer2013} (Theorem \ref{altstab} in the introduction) implies that $\imath$ is an isomorphism on homology in the claimed range.
\end{proof}

\section{Closed manifolds}\label{ss-closed}

In this section we describe how to extend Corollary \ref{rangeopen} to the case of closed manifolds $M$, which finishes the proof of Theorem \ref{orientrange}. This is based on the arguments used by McDuff to prove Theorem 1.1 of \cite{Mc1}. Since the spaces $C_k^+(M)$ and $\Gamma_k^+(M)$ are both path-connected, the statement of Theorem \ref{orientrange} is true for $k\leq 4$, so we will now assume that $k\geq 5$.

Choose a Riemannian metric on $M$ and an isometric embedding $D\hookrightarrow M$ of the closed, $d$-dimensional unit disc $D$ (we may always scale the metric to make this possible). Following the ideas of \S 6 of \cite{Randal-Williams2013} (see also \S 5 of \cite{Palmer2013}) we define $U_k^+(M)$ to be the subspace of $C_k^+(M)$ of configurations which have a unique closest point in $D$ to the center $0\in D$. The spaces $U_k^+(M)$ and $C_k^+(M\smallsetminus\{0\})$ form an open cover of $C_k^+(M)$ so by excision the homotopy cofiber of the inclusion $C_k^+(M\smallsetminus\{0\}) \hookrightarrow C_k^+(M)$ is homology-equivalent to that of the inclusion $U_k^+(M\smallsetminus\{0\}) \hookrightarrow U_k^+(M)$. For a configuration of $k\geq 3$ points with one marked point, giving an orientation of all $k$ points is equivalent to giving an orientation of the $k-1$ non-marked points. We are assuming that $k\geq 5$, so using this observation we see that the latter inclusion is homeomorphic to the inclusion $(D\smallsetminus \{0\}) \times C_{k-1}^+(M\smallsetminus\{0\}) \hookrightarrow D\times C_{k-1}^+(M\smallsetminus\{0\})$, whose homotopy cofiber is $\Sigma^d(C_{k-1}^+(M\smallsetminus\{0\})_+)$. So we have an identification:
\begin{equation}\label{eHocofib1}
\hocofib(C_k^+(M\smallsetminus\{0\}) \hookrightarrow C_k^+(M)) \;\;\simeq_{H\bZ}\;\; \Sigma^d(C_{k-1}^+(M\smallsetminus\{0\})_+),
\end{equation}
where $(-)_+$ denotes adding an isolated basepoint and $\Sigma^d$ is the $d$th reduced suspension.

We want to similarly identify the homotopy cofiber of the inclusion of section spaces $\Gamma_k^+(M\smallsetminus D) \hookrightarrow \Gamma_k^+(M)$ with $\Sigma^d(\Gamma_k^+(M\smallsetminus D)_+)$, up to homology equivalence. We can choose a CW structure on $M$ so that $(M,D)$ is a relative CW-complex. Also, the subspace $M\smallsetminus D$ is relatively compact (simply because $M$ is compact). Hence, applying Lemma \ref{lRestrictionSerre}, the restriction map $\Gamma(M) \to \Gamma(M,M\smallsetminus D) = \Gamma(D)$ is a Serre fibration. Its fiber $F$ over the section-at-infinity in $\Gamma(D)$ is the space of all sections of $\dot TM\to M$ which are equal to $\infty$ on $D$. This contains a copy of $\Gamma(M\smallsetminus D)$ as a subspace (extending $s\in\Gamma(M\smallsetminus D)$ by $\infty$ on $D$), and the inclusion of this subspace into $F$ is a homotopy equivalence (with homotopy inverse induced by any diffeotopy from the identity $M\to M$ to a diffeomorphism which takes $D$ onto a slightly larger disc).

The restriction of $\Gamma(M)\to \Gamma(D)$ to the path-component $\Gamma_k(M)$ is also a Serre fibration, whose fiber we denote $F_k$. Similarly to the previous paragraph, this contains a homeomorphic copy of $\Gamma_k(M\smallsetminus D)$, and the inclusion $\Gamma_k(M\smallsetminus D) \hookrightarrow F_k$ is a homotopy equivalence.

The composition of $\Gamma_k(M)\to \Gamma(D)$ with the double cover $\Gamma_k^+(M)\to \Gamma_k(M)$ is again a Serre fibration, with fiber $F_k^+$ over the section-at-infinity in $\Gamma(D)$, where $F_k^+ \to F_k$ is the restriction of the double cover $\Gamma_k^+(M) \to \Gamma_k(M)$ to the subspace $F_k \subseteq \Gamma_k(M)$. The further restriction of this double cover to $\Gamma_k(M\smallsetminus D) \subseteq F_k$ is $\Gamma_k^+(M\smallsetminus D)$,\footnote{One can see this from the characterization of $\Gamma_k^+(M)$ in the introduction as the double cover which pulls back to the oriented configuration space $C_k^+(M)$ along the scanning map $C_k(M)\to \Gamma_k(M)$, together with the fact that $C_k^+(M)$ pulls back along the inclusion $C_k(M\smallsetminus D) \hookrightarrow C_k(M)$ to $C_k^+(M\smallsetminus D)$.} so we have an inclusion $\Gamma_k^+(M\smallsetminus D) \hookrightarrow F_k^+$ of double covers over the inclusion $\Gamma_k(M\smallsetminus D)\hookrightarrow F_k$. The latter inclusion is a homotopy equivalence, and therefore the inclusion $\Gamma_k^+(M\smallsetminus D) \hookrightarrow F_k^+$ is a weak equivalence.

Note that $\Gamma(D) \cong \mathrm{Map}(D,S^d) \simeq S^d$. Applying the following general lemma to the Serre fibration $\Gamma_k^+(M)\to \Gamma(D)$ gives us an identification:
\begin{equation}\label{eHocofib2}
\hocofib(\Gamma_k^+(M\smallsetminus D) \hookrightarrow \Gamma_k^+(M)) \;\;\simeq_{H\bZ}\;\; \Sigma^d(\Gamma_k^+(M\smallsetminus D)_+).
\end{equation}

\begin{lemma}\label{lWang}
Let $\pi\colon Y\to X$ be a Serre fibration with $X\simeq S^d$ and let $F = \pi^{-1}(x_0)$ for a point $x_0 \in X$. Then the homotopy cofiber of the inclusion $F\hookrightarrow Y$ is homology equivalent to $\Sigma^d(F_+)$.
\end{lemma}
\begin{proof}
Choose a homotopy equivalence $X\to S^d$ and replace the composition $Y\to X\to S^d$ by a fibration $p\colon Z\to S^d$:
\begin{equation}\label{eFibrations}
\begin{split}
\begin{tikzpicture}
[x=1mm,y=1mm]
\node (tl) at (0,12) {$Y$};
\node (tr) at (20,12) {$Z$};
\node (bl) at (0,0) {$X$};
\node (br) at (20,0) {$S^d$};
\draw[->] (tl) to node[above,font=\small]{$\simeq$} (tr);
\draw[->] (bl) to node[above,font=\small]{$\simeq$} (br);
\draw[->] (tl) to node[left,font=\small]{$\pi$} (bl);
\draw[->] (tr) to node[right,font=\small]{$p$} (br);
\draw[->] (tl) to (br);
\end{tikzpicture}
\end{split}
\end{equation}
Denote the image of $x_0$ in $S^d$ by $y_0$ and denote the fiber $p^{-1}(y_0)$ by $F^\prime$. The homology of the homotopy cofiber of $F\hookrightarrow Y$ is the homology of the pair $(Y,F)$. Using the long exact sequence on homotopy groups for the map of fibrations \eqref{eFibrations} and the long exact sequence on homology for the map of pairs $(Y,F)\to (Z,F^\prime)$ we have $H_*(Y,F)\cong H_*(Z,F^\prime)$. Let $D_+$ and $D_-$ be complementary hemispheres of $S^d$, so $\partial D_+ = \partial D_- = S^{d-1}$, and say $y_0$ is in the interior of $D_-$. The inclusion $F^\prime\hookrightarrow p^{-1}(D_-)$ is a homotopy equivalence, so $H_*(Z,F^\prime) \cong H_*(Z,p^{-1}(D_-))$. By excision, this is isomorphic to $H_*(p^{-1}(D_+),p^{-1}(S^{d-1}))$. Since $D_+$ is contractible, the restriction of the fibration $p$ over $D_+$ is homotopy equivalent (over $D_+$) to the trivial fibration $D_+ \times F^\prime \to D_+$, and so this homology group is in turn isomorphic to $H_*(D_+ \times F^\prime, S^{d-1}\times F^\prime)$. The inclusion $S^{d-1}\times F^\prime \hookrightarrow D_+\times F^\prime$ is a cofibration, so this is the reduced homology of the quotient, which is homeomorphic to $\Sigma^d(F^\prime_+)$. Finally, $F\simeq F^\prime$ so this is $\widetilde{H}_*(\Sigma^d(F_+))$. We therefore have the required isomorphism of homology groups. Moreover, the zig-zag
\begin{center}
\small
\begin{tikzpicture}
[x=1mm,y=1mm]
\node (1) at (0,0) {$(Y,F)$};
\node (2) at (20,0) {$(Z,F^\prime)$};
\node (3) at (45,0) {$(Z,p^{-1}(D_-)$};
\node (4) at (80,0) {$(p^{-1}(D_+),p^{-1}(S^{d-1}))$};
\node (5) at (80,-10) {$(D_+,S^{d-1})\times F^\prime$};
\node (6) at (45,-10) {$(D_+,S^{d-1})\times F$};
\draw[->] (1) to node[above,font=\small]{$\simeq_{H\bZ}$} (2);
\draw[->] (2) to node[above,font=\small]{$\simeq_{\mathrm{w}}$} (3);
\draw[->] (4) to node[above,font=\small]{$\simeq_{H\bZ}$} (3);
\draw[->] (6) to node[above,font=\small]{$\simeq_{\mathrm{w}}$} (5);
\node at (80,-5) {\rotatebox{270}{$\simeq$}};
\end{tikzpicture}
\end{center}
induces a homology equivalence between $(Y,F)$ and $(D^d,S^{d-1})\times F = \Sigma^d(F_+)$.
\end{proof}

With this preliminary work done, we can now easily deduce deduce Theorem \ref{orientrange} for closed manifolds from Theorem \ref{orientrange} for manifolds admitting boundary, which was Corollary \ref{rangeopen}.

\begin{proof}[Proof of Theorem \ref{orientrange} for closed manifolds.]
Let $\hat{D}$ be another closed disc in $M$, containing $D$, which is sufficiently larger than $D$ so that for configurations $c$ in $M\smallsetminus\hat{D}$ the section $s(c)$ is equal to $\infty$ on an open neighborhood of $D$. This means that the scanning map is a map of pairs
\[
s\colon (C_k(M),C_k(M\smallsetminus\hat{D})) \longrightarrow (\Gamma_k(M),\Gamma_k(M\smallsetminus D)),
\]
and it therefore also lifts to a map of pairs
\[
s^+\colon (C_k^+(M),C_k^+(M\smallsetminus\hat{D})) \longrightarrow (\Gamma_k^+(M),\Gamma_k^+(M\smallsetminus D)).
\]

Using the identifications \eqref{eHocofib1} and \eqref{eHocofib2} above, and the fact that the two inclusions $C_k^+(M\smallsetminus \hat{D}) \hookrightarrow C_k^+(M\smallsetminus D) \hookrightarrow C_k^+(M\smallsetminus\{0\})$ are homotopy equivalences, we obtain a map of long exact sequences:
\begin{center}
\begin{tikzpicture}
[x=1mm,y=1mm,font=\footnotesize]
\node (t1) at (0,12) {$H_{i-d+1}(C_{k-1}^+(M\!\smallsetminus\! D))$};
\node (t2) at (35,12) {$\widetilde{H}_i(C_k^+(M\!\smallsetminus\! D))$};
\node (t3) at (65,12) {$\widetilde{H}_i(C_k^+(M))$};
\node (t4) at (95,12) {$H_{i-d}(C_{k-1}^+(M\!\smallsetminus\! D))$};
\node (b1) at (0,0) {$H_{i-d+1}(\Gamma_k^+(M\!\smallsetminus\! D))$};
\node (b2) at (35,0) {$\widetilde{H}_i(\Gamma_k^+(M\!\smallsetminus\! D))$};
\node (b3) at (65,0) {$\widetilde{H}_i(\Gamma_k^+(M))$};
\node (b4) at (95,0) {$H_{i-d}(\Gamma_k^+(M\!\smallsetminus\! D))$};
\draw[->] (t1) to (t2);
\draw[->] (t2) to (t3);
\draw[->] (t3) to (t4);
\draw[->] (b1) to (b2);
\draw[->] (b2) to (b3);
\draw[->] (b3) to (b4);
\draw[->] (t1) to (b1);
\draw[->] (t2) to (b2);
\draw[->] (t3) to (b3);
\draw[->] (t4) to (b4);
\draw[->] (t4) to (113,12);
\draw[->] (b4) to (113,0);
\draw[->] (-20,0) to (b1);
\draw[->] (-20,12) to (t1);
\end{tikzpicture}
\end{center}
The middle two maps are induced by the scanning map $s^+$, and, following through the identifications of the homotopy cofibers above, one sees that the outer vertical maps are induced by the composition $s^+\circ t^+$ of the stabilization map followed by the scanning map.

Since $M\smallsetminus D$ admits boundary $\partial D$, we can apply Theorem \ref{altstab} and Corollary \ref{rangeopen} to conclude that the first, second, fourth and fifth (not drawn) vertical maps above are isomorphisms in the ranges $3i\leq k+3d-9$, $3i\leq k-5$, $3i\leq k+3d-6$ and $3i\leq k-2$ respectively. Since $d\geq 2$ these conditions all hold when $3i\leq k-5$, so by the five-lemma the third vertical map above is an isomorphism in this range. Moreover, in the range $3i\leq k-2$ the second vertical map is a surjection and the fourth and fifth vertical maps are isomorphisms, which implies that the third vertical map is a surjection.
\end{proof}

Using Theorem \ref{orientrange}, we can now prove a similar result regarding the scanning map $C_k(M)\to \Gamma_k(M)$ on twisted homology $H_*(-;\bZ\signtwist)$. Recall that $R\signtwist$, for a ring $R$, is the $R[\bZ/2]$-module where the generator of $\bZ/2$ acts by multiplication by $-1$. The fundamental groups $\pi_1(C_k(M))$ and $\pi_1(\Gamma_k(M))$ have natural maps to $\bZ/2$ described in \S\ref{sssDoubleCover}, so $R\signtwist$ becomes a module over their group-rings too. See the introduction or \cite{BCT} for a discussion of the relationship between $H_*(C_k(M);\bZ\signtwist)$ and the homology of the spaces appearing in the generalized Snaith splitting introduced in \cite{B}.

\begin{corollary}\label{rangeopentwist}
The homomorphism $H_*( C_k(M);\bZ\signtwist) \to H_*(\Gamma_k(M);\bZ\signtwist)$ induced by the scanning map is an isomorphism for $*\leq (k-5)/3$ and a surjection for $*\leq (k-2)/3$. 
\end{corollary}

\begin{proof}
There is a short exact sequence of $\bZ[\bZ/2]$-modules
\begin{equation}\label{eSESofModules}
0\to \bZ \to \bZ[\bZ/2] \to \bZ\signtwist \to 0
\end{equation}
which via the homomorphism $\pi_1(C_k(M)) \to \bZ/2$ can also be viewed as a short exact sequence of $\bZ[\pi_1(C_k(M))]$-modules, so we obtain a long exact sequence of the homology groups of $C_k(M)$ with these coefficients. We can also do the same for $\Gamma_k(M)$ using the homomorphism $\pi_1(\Gamma_k(M)) \to \bZ/2$ instead. Since the homomorphism $\pi_1(C_k(M)) \to \bZ/2$ factors through this one via the scanning map, it induces a map between these long exact sequences. Identifying $H_i(C_k(M);\bZ[\bZ/2])$ with $H_i(C_k^+(M);\bZ)$ and $H_i(\Gamma_k(M);\bZ[\bZ/2])$ with $H_i(\Gamma_k^+(M);\bZ)$, and abbreviating $H_i(-) = H_i(-;\bZ)$, this map of long exact sequences is:
\begin{center}
\begin{tikzpicture}
[x=0.9mm,y=1mm,font=\small]
\node (t1) at (0,12) {$H_i(C_k(M))$};
\node (t2) at (28,12) {$H_i(C_k^+(M))$};
\node (t3) at (60,12) {$H_i(C_k(M);\bZ\signtwist)$};
\node (t4) at (95,12) {$H_{i-1}(C_k(M))$};
\node (t5) at (125,12) {$H_{i-1}(C_k^+(M))$};
\node (b1) at (0,0) {$H_i(\Gamma_k(M))$};
\node (b2) at (28,0) {$H_i(\Gamma_k^+(M))$};
\node (b3) at (60,0) {$H_i(\Gamma_k(M);\bZ\signtwist)$};
\node (b4) at (95,0) {$H_{i-1}(\Gamma_k(M))$};
\node (b5) at (125,0) {$H_{i-1}(\Gamma_k^+(M)).$};
\draw[->] (t1) to (t2);
\draw[->] (t2) to (t3);
\draw[->] (t3) to (t4);
\draw[->] (t4) to (t5);
\draw[->] (b1) to (b2);
\draw[->] (b2) to (b3);
\draw[->] (b3) to (b4);
\draw[->] (b4) to (b5);
\draw[->] (t1) to (b1);
\draw[->] (t2) to (b2);
\draw[->] (t3) to (b3);
\draw[->] (t4) to (b4);
\draw[->] (t5) to (b5);
\end{tikzpicture}
\end{center}
In the range $3i\leq k-5$ the first, second, fourth and fifth vertical maps above are isomorphisms, by Theorems \ref{scanrange} and \ref{orientrange}, so by the five-lemma the middle vertical map is also an isomorphism. In the range $3i\leq k-2$ the second, fourth and fifth vertical maps are isomorphisms, so the middle vertical map is a surjection.
\end{proof}

\small
\phantomsection
\addcontentsline{toc}{section}{References}
\renewcommand{\bibfont}{\normalfont\small}
\defbibnote{myprenote}{}
\printbibliography[prenote=myprenote]

\normalsize

\vspace*{1em}

\noindent Jeremy Miller \hspace*{1em} \href{mailto:jkmiller@math.stanford.edu}{\sffamily jkmiller@math.stanford.edu}

\noindent {\small Department of Mathematics, Stanford University, Building 380, Stanford, California, 94305, USA}

\vspace*{1em}

\noindent Martin Palmer \hspace*{1em} \href{mailto:mpalm_01@uni-muenster.de}{\sffamily mpalm{\textunderscore}01@uni-muenster.de}

\noindent {\small Mathematisches Institut, WWU M{\"u}nster, Einsteinstra{\ss}e 62, 48149 M{\"u}nster, Germany}

\end{document}